%
%
%
%

\documentclass[12pt]{article}
\usepackage{amsmath, amsfonts, amsthm, amssymb}
\usepackage{centernot}
\usepackage{fullpage}

\newtheorem*{corollary*}{Corollary}
\newtheorem*{definition*}{Definition}
\newtheorem{lemma}{Lemma}
\newtheorem{remark}{Remark}

\newcommand{\QED}{\qed}

\newenvironment{litemize}[1]{%
\begin{list}{}{%
\settowidth{\labelwidth}{#1}%
\setlength{\leftmargin}{\labelwidth}%
\addtolength{\leftmargin}{\labelsep}}}%
{\end{list}}

\newcommand{\lb}{\linebreak}
\newcommand{\cof}{\mathrm{cof\, }}
\newcommand{\RR}{\mathbb{R}}
\newcommand{\eps}{\varepsilon}
\newcommand{\Chi}{\mathcal{X}}
\newcommand{\cD}{\mathcal{D}}
\newcommand{\pd}[2]{\frac{\partial #1}{\partial #2}}
\newcommand{\BBR}[1]{\left( #1 \right)}
\newcommand{\BBS}[1]{\left[ #1 \right]}
\newcommand{\BBA}[1]{\left | #1 \right |}
\newcommand{\mdd}[1]{M^{#1\times #1}}
\newcommand{\mddplus}[1]{M^{#1\times #1}_+}
\newcommand{\PHD}{\frac{\partial{\Phi^A}}{\partial{F_{i\alpha}}}}

\title{{\normalsize \bf CONVERGENCE OF VARIATIONAL APPROXIMATION SCHEMES FOR ELASTODYNAMICS WITH POLYCONVEX ENERGY}\\
{\footnotesize (in: {\it Journal for Analysis and its Applications (ZAA)}, Vol. 33, Issue 1, pp.43-64, 2014)}
}

\author{Alexey Miroshnikov \thanks{ Department of Mathematics, University of Maryland, College Park, USA (amiroshn@gmail.com).} \;   and \, Athanasios E. Tzavaras
\thanks{Department of Applied Mathematics,
University of Crete, Heraklion, Greece; Institute for Applied and Computational Mathematics, FORTH, Heraklion, Greece (tzavaras@tem.uoc.gr).}
}

\date{}

\begin{document}

\maketitle

\abstract{We consider a variational scheme developed by S.~Demoulini, D.~M.~A. Stuart and A.~E. Tzavaras [Arch.\;Rat.\;Mech.\;Anal.\;157\;(2001)]
that approximates the equations of three dimensional
elastodynamics with polyconvex stored energy.
 We establish the convergence of the
time-continuous interpolates constructed in the scheme to a
solution of polyconvex elastodynamics before shock formation. The
proof is based on a relative entropy estimation for the
time-discrete approximants in an environment of $L^p$-theory
bounds, and provides an error estimate for the approximation
before the formation of shocks.}\\

\noindent {\bf keywords}: nonlinear elasticity, polyconvexity, variational approximation scheme. \\

\noindent AMS Subject Classification: 35L70 74B20 74H20

\section{Introduction}

The equations of nonlinear elasticity are the system 
\begin{equation*}
\label{ELASTINTRO1} y_{tt}=\mathrm{div} \,
\frac{\partial{W}}{\partial{F}}(\nabla{y})
\end{equation*}
where $y:\Omega\times\RR^{+}\to\RR^3$ stands for { the motion}, and we have employed the
constitutive theory of hyperelasticity, i.e.~the Piola-Kirchhoff stress tensor $S$ is
expressed as the gradient, $S(F)=\frac{\partial{W}}{\partial{F}}(F)$, of a stored energy
function $W(F)$. The equations \eqref{ELASTINTRO1} are often recast as a system of
conservation laws,
\begin{equation}\label{ELASTINTRO2}
\begin{aligned}
\partial_{t}v_i &= \partial_{\alpha}  \frac{\partial{W}}{\partial{F_{i\alpha}}}(F)\\
\partial_{t}F_{i\alpha}&=\partial_{\alpha}v_i,
\end{aligned}
\end{equation}
for the velocity 
$v=\partial_t y$ and the deformation gradient
$F=\nabla y$. The differential constraints 
\begin{equation*}
\partial_{\beta} F_{i\alpha} - \partial_{\alpha} F_{i\beta} = 0
\end{equation*}
are propagated from  the kinematic equation
\eqref{ELASTINTRO2}$_2$ and are an involution, \cite{Dafermos86}.

The requirement of frame indifference imposes that
$W(F):M_{+}^{3\times3}\to[0,\infty)$ be invariant under rotations.
This renders the assumption of convexity of $W$ too restrictive
\cite{Tr}, and convexity has been replaced by various weaker
conditions familiar from the theory of elastostatics, see
\cite{Ball77, BCO,Ball02} for a recent survey. A
commonly employed assumption is that of polyconvexity, postulating
that
\begin{equation*}
W(F)=G\circ\Phi(F)
\end{equation*}
where $\Phi(F):=(F,\cof{F},\det{F})$ is the vector of
null-Lagrangians and $G=G(F,Z,w)=G(\Xi)$ is a convex function of
$\Xi\in \RR^{19}$; this encompasses certain physically realistic
models \cite[Section 4.9, 4.10]{Ciarlet}. Starting with the work of
Ball~\cite{Ball77}, substantial progress has been achieved for
handling the lack of convexity of~$W$ within the existence theory
of elastostatics.

For the elastodynamics system local existence of classical
solutions has been established in \cite{DafermosHrusa85},
\cite[Theorem 5.4.4]{Dafermos10} for rank-1 convex stored energies,
and in \cite[Theorem 5.5.3]{Dafermos10} for polyconvex stored
entropies. The existence of global weak solutions is an open
problem, except in one-space dimension, see \cite{DiPerna83}.
Construction of entropic measure valued solutions has been
achieved in \cite{DST} using a variational approximation method
associated to a time-discretized scheme. Various uniqueness
results of smooth solutions in the class of entropy weak and even
dissipative measure valued solutions are available for the
elasticity system \cite{Dafermos86,LT,Dafermos10,DST2}.

The objective of the present work is to show that the
approximation scheme of \cite{DST} converges to the classical
solution of the elastodynamics system before the formation of
shocks. To formulate the problem we outline the scheme in
\cite{DST}  and refer to Section 2 for a detailed presentation.
The null-Lagrangians $\Phi^A(F)$, $A=1,\dots,19$ satisfy
\cite{Qin98} the nonlinear transport identities
\begin{equation*}
\partial_{t}  {\Phi^{A}(F)}={\partial_\alpha }{\biggl(\frac{\partial{\Phi^A}}{\partial{F_{i\alpha}}}(F)v_i}\biggr) \, .
\end{equation*}
This allows to view the system  \eqref{ELASTINTRO2} as constrained
evolution of the extended system
\begin{equation}\label{EXTSYSINTRO}
\begin{aligned}
\partial_{t} v_i &=\partial_{\alpha}\Bigl(\pd {G}{\Xi{_A}}(\Xi)
\,\,\pd{\Phi^{A}}{F_{i\alpha}}(F)\Bigr)\\
\partial_{t}\Xi{_{A}}&=\partial_{\alpha}\Bigl(\pd{\Phi^{A}}{F_{i
\alpha}}(F)\,v_{i}\Bigr). \\
\end{aligned}
\end{equation}
The extension \eqref{EXTSYSINTRO} has the properties: if
$F(\cdot,0)$ is a gradient and $\Xi(\cdot,0)=\Phi(F(\cdot,0))$,
then $F(\cdot,t)$ remains a gradient and
$\Xi(\cdot,t)=\Phi(F(\cdot,t))$ 
for all $t$. The extended system is
endowed with the entropy identity 
\begin{equation*}
\partial_t\biggl(\frac{|v|^2}{2}+G(\Xi)\biggr)-\partial_{\alpha}\biggl(v_i\,\pd{G}{\Xi_A}(\Xi)\,\pd{\Phi^A}{F_{i\alpha}}(F)\biggr)=0
\end{equation*}
the entropy is convex and the system \eqref{EXTSYSINTRO} is thus
symmetrizable.

{ For periodic solutions $v,\Xi$} (on the torus { $\mathbb{T}^3$}) a variational
approximation method based on the time-discretization of \eqref{EXTSYSINTRO} is proposed
in \cite{DST}: Given a time-step $h>0$ and initial data $(v^0,\Xi^0)$ the scheme provides
the sequence of iterates $(v^j,\Xi^j)$, $j\geqslant 1$, by solving
\begin{equation}\label{DISCEXTSYSINTRO}
\begin{aligned}
\frac{v^j_i-v^{j-1}_i}{h}&=\partial_{\alpha}\Bigl(\pd{G}{\Xi{_{A}}}(\Xi^j)\,\,\pd{\Phi^{A}}{F_{i\alpha}}\,\BBR{F^{j-1}}\Bigr)\\
\frac{\BBR{\Xi^j-\Xi^{j-1}}_{A}}{h}
&=\partial_{\alpha}\Bigl(\pd{\Phi^{A}}{F_{i\alpha}}\,\BBR{F^{j-1}}\,v^j_i\Bigr).
\end{aligned} \quad \text{in } \cD'({\mathbb{T}^3})
\end{equation}
This problem is solvable using variational methods and the
iterates $(v^j,\Xi^j)$ give rise to a  time-continuous approximate
solution $\Theta^{(h)}=(V^{(h)},\Xi^{(h)})$. It is proved in
\cite{DST} that the approximate solution generates a
measure-valued solution of the equations of polyconvex
elastodynamics.

In this work we consider a smooth solution of the elasticity system \lb
$\bar{\Theta} \!= \!(\bar{V},\bar{\Xi})$ defined on $[0,T] \!\times \!{\mathbb{T}^3}$ and show that
the approximate solution $\Theta^{(h)}$ constructed via the iterates $(v^j,\Xi^j)$ of
\eqref{DISCEXTSYSINTRO}  converges to $\bar{\Theta}=(\bar{V},\bar{\Xi})$ at a convergence
rate $O(h)$. The method of proof is based on the relative entropy method developed for
convex entropies in \cite{Dafermos79,DiPerna79} and adapted for the system of polyconvex
elasticity in \cite{LT} using the embedding to the system \eqref{EXTSYSINTRO}. The
difference between $\Theta^{(h)}$ and $\bar{\Theta}$ is controlled by monitoring the
evolution of the relative entropy
\begin{equation*}
\eta^r=\frac{1}{2}|V^{(h)}-\bar{V}|^2+ G(\Xi^{(h)})-G(\bar{\Xi})
-\nabla{G(\bar{\Xi})}(\Xi^{(h)}-\bar{\Xi}) \,.
\end{equation*}
We establish control of the function
\begin{equation*}
{\mathcal{E}(t)} := \int_{{\mathbb{T}^3}} \,\Bigl( (1+|F^{(h)}|^{p-2}+|\bar{F}|^{p-2})
|F^{(h)}-{\bar{F}}|^2+|\Theta^{(h)}-\bar{\Theta}|^2 \Bigr) \, dx
\end{equation*}
and prove the estimation
\begin{equation*}
\begin{aligned}
{\mathcal{E}(t)} \leqslant C \Bigl( \mathcal{E}(0) +h \Bigr), \quad t\in [0,T]
\end{aligned}
\end{equation*}
which provides the result. There are two novelties in the present
work: (a) In adapting the relative entropy method to the subject
of time-discretized approximations. (b) In employing the method in
an environment where $L^p$-theory needs to be used for estimating
the relative entropy.

{ This work  is a first step towards implementing a finite element method based on the
variational approximation. To do that, one has to devise appropriate finite element
spaces that  preserve the involution structure. This is the subject of a future work.}

The paper is organized as follows. In Section 2 we present the
variational approximation scheme and state the Main Theorem. In
Section 3 we derive the relative entropy identity \eqref{RENTID}
and, finally, in Section 4 we carry out the cumbersome estimations
for the terms in the relative entropy identity and conclude the
proof of Main Theorem via Gronwall's inequality.

\section{The variational approximation scheme and statement of the Main Theorem}

We assume that the stored energy $W:\mddplus{3} \to \RR$ is
\emph{polyconvex}:
\begin{equation}\label{POLYCONVEXITY}
W(F)=G\circ\Phi(F)
\end{equation}
with
\begin{equation*}
G=G(\Xi)=G(F,Z,w):\mdd{3}\times\mdd{3}\times\RR \,\cong\, \RR^{19}
\to \RR
\end{equation*}
uniformly convex and
\begin{equation}\label{PHIDEF}
\Phi(F) = (F, \cof{F}, \det{F}).
\end{equation}


\subsection*{Assumptions} We work with periodic boundary
conditions,  i.e.~the spatial domain $\Omega$ is taken to be
{the} three dimensional torus $\mathbb{T}^3$. The indices
$i,\alpha, \dots$ generally run over $1,\dots,3$ while $A,B,\dots$
run over $1,\dots,19$. We use the notation $L^p=L^p(\mathbb{T}^3)$
and $W^{1,p}=W^{1,p}(\mathbb{T}^3)$. Finally, we impose the
following convexity and growth assumptions on $G$:
\begin{litemize}{(H4)}

\item[(H1)] $G\in C^3(\mdd{3}\times \mdd{3} \times \RR;
[0,\infty))$ is of the form
\begin{equation}\label{GDECOMP}
G(\Xi)=H(F) + R(\Xi)
\end{equation}
with $H\in C^3(\mdd{3}; [0,\infty))$ and $R\in C^3(\mdd{3}\times
\mdd{3} \times \RR; [0,\infty))$ strictly convex satisfying
\begin{equation*}
\kappa |F|^{p-2}|z|^{2} \, \leqslant \, z^{T}\nabla^2H(F)z \,
\leqslant \, \kappa' |F|^{p-2}|z|^{2}, \quad \forall z \in \RR^9
\end{equation*}
and $\gamma I  \leqslant  \nabla^2R \leqslant \gamma' I\,$ for some fixed
$\gamma,\gamma',\kappa,\kappa'>0$ and $p\in {[6,\infty)}$.

\item[(H2)] $G(\Xi)\,\geqslant\,c_1|F|^p+c_2|Z|^2+c_3|w|^2-c_4$.

\item[(H3)] $G(\Xi)\,\leqslant\,c_5(|F|^p+|Z|^2+|w|^2+1)$.

\item[(H4)] $|G_{F}|^{\frac{p}{p-1}}+
|G_Z|^{\frac{p}{p-2}}+|G_w|^{\frac{p}{p-3}}
\,\leqslant\,c_6\BBR{|F|^p+|Z|^2+|w|^2+1}.$

\item[(H5)]
$\BBA{\frac{\partial^3H}{\partial{F_{i\alpha}}\partial{F_{ml}}\partial{F_{rs}}}}
\leqslant c_7 |F|^{p-3}$ \quad and \quad
$\BBA{\frac{\partial^3R}{\partial{\Xi_A}
\partial{\Xi_B} \partial{\Xi_D}}} \leqslant c_8$.

\end{litemize}



\subsection*{Notations} To simplify notation we write
\begin{equation*}
\begin{aligned}
G_{,A}\,(\Xi)&=\pd{G}{\Xi_{A}}(\Xi),& \quad
R_{,A}\,(\Xi)&=\pd{R}{\Xi_{A}}(\Xi),&\\
H_{,i\alpha}\,(F)&=\pd{H}{F_{i\alpha}}(F),& \quad
\Phi^{A}_{,i\alpha}\,(F)&=\pd{\Phi^{A}}{F_{i\alpha}}(F).&
\end{aligned}
\end{equation*}
In addition, for each $i,\alpha =1,2,3$ we set
\begin{equation}\label{GCOMPFLD}
\begin{aligned}
g_{i\alpha}(\Xi,F^*)=\pd{G}{\Xi_{A}}\,(\Xi)
\,\pd{\Phi^{A}}{F_{i\alpha}}\,(F^*), \quad F^* \in \RR^9, \,\Xi
\in \RR^{19}
\end{aligned}
\end{equation}
(where we use {the} summation convention over repeated indices)
and denote the corresponding fields $g_i:
\RR^{19}\times\RR^9\to\RR^3$ by 
\begin{equation*}
g_i(\Xi,F^*):=(g_{i1},g_{i2},g_{i3})(\Xi,F^*).
\end{equation*}

\subsection{Time-discrete variational scheme}

The equations of elastodynamics \eqref{ELASTINTRO1} for polyconvex
stored-energy \eqref{POLYCONVEXITY} can be expressed as a system
of conservation laws,
\begin{equation}\label{PXELASTSYS}
\begin{aligned}
\partial_{t} v_i&=\partial_{\alpha}\biggl(\pd {G}{\Xi_{A}}(\Phi(F))
\pd{\Phi^{A}}{F_{i\alpha}}(F)\biggr)\\
\partial_{t} F_{i\alpha}&=\partial_{\alpha} v_i
\end{aligned}
\end{equation}
which is equivalent to \eqref{ELASTINTRO1} subject to differential
constrains
\begin{equation}\label{GRADCONSTR}
\partial_{\beta} F_{i\alpha} - \partial_{\alpha} F_{i\beta} = 0
\end{equation}
that are an involution \cite{Dafermos86}: if they are satisfied
for $t=0$ then \eqref{PXELASTSYS} propagates \eqref{GRADCONSTR} to
satisfy for all times. Thus the system \eqref{PXELASTSYS} is
equivalent to systems \eqref{ELASTINTRO1} whenever $F(\cdot,0)$ is
a gradient.


The components of $\Phi(F)$ defined by \eqref{PHIDEF} are
null-Lagrangians and satisfy 
\begin{equation*}
\partial_{\alpha}\biggl(\pd{\Phi^A}{F_{i\alpha}}(\nabla{u})\biggr) =
0, \quad A=1,\dots,19
\end{equation*}
for any smooth $u(x):\RR^3 \to\RR^3$. Therefore, if $(v,F)$ are
smooth solutions of~\eqref{PXELASTSYS}, the null-Lagrangians
$\Phi^A(F)$ satisfy the transport identities \cite{DST}
\begin{equation}\label{NLTID}
\partial_t \Phi^A(F) = \partial_{\alpha} \biggl(\pd{\Phi^A}{F_{i\alpha}}(F)v_i
\biggr), \quad \forall F \  \mbox{with} \
\partial_{\beta}F_{i\alpha}=\partial_{\alpha}F_{i\beta}.
\end{equation}
Due to the identities \eqref{NLTID} the system of polyconvex
elastodynamics \eqref{PXELASTSYS} can be embedded into the
enlarged system \cite{DST}
\begin{equation}\label{EXTSYS}
\begin{aligned}
\partial_{t}v_i&=\partial_{\alpha}\biggl(\pd {G}{\Xi_A}(\Xi)
\,\,\pd{\Phi^{A}}{F_{i\alpha}}(F)\biggr)\\
\partial_{t}\Xi_{A}&=\partial_{\alpha}\biggl(\pd{\Phi^{A}}{F_{i
\alpha}}(F)\,v_{i}\biggr).
\end{aligned}
\end{equation}
The extension has the following properties:
\begin{litemize}{(E\,3)}

\item[(E\,1)] If $F(\cdot,0)$ is a gradient then $F(\cdot,t)$
remains a gradient 
for all $t$.

\item[(E\,2)] If $F(\cdot,0)$ is a gradient and
$\Xi(\cdot,0)=\Phi(F(\cdot,0))$, then $F(\cdot,t)$ remains a
gradient and $\Xi(\cdot,t)=\Phi(F(\cdot,t))$ 
for all $t$. In
other words, the system of polyconvex elastodynamics can be viewed
as a constrained evolution of \eqref{EXTSYS}.

\item[(E\,3)] The enlarged system admits a convex entropy
\begin{equation}\label{ENTDEF}
\eta(v,\Xi) = \tfrac{1}{2} |v|^2 +G(\Xi), \quad (v,\Xi) \in
\RR^{22}
\end{equation}
and thus is symmetrizable (along the solutions that are
gradients).
\end{litemize}


Based on the time-discretization of the enlarged system
\eqref{EXTSYS} S.~Demoulini, D.~M.~A.~Stuart and A.~E.~Tzavaras \cite{DST}
developed a variational approximation scheme which, for the given
initial data 
\begin{equation*}
\Theta^0:=(v^0, \Xi^0)=(v^0,F^0,Z^0,w^0) \in L^2 \times L^p \times
L^2 \times L^2
\end{equation*}
and fixed $h>0$, constructs the sequence of successive iterates 
\begin{equation*}
\Theta^j:=(v^j, \Xi^j)=(v^j,F^j,Z^j,w^j)  \in  L^2 \times L^p
\times L^2 \times L^2, \quad j \geqslant 1
\end{equation*}
with the following properties (see \cite[Lemma 1, Corollary
2]{DST}):

\begin{litemize}{(P\,5)}

\item[(P\,1)] The iterate $(v^j,\Xi^j)$ is the unique minimizer of
the functional
\begin{equation*}
\mathcal{J}(v,\Xi) = \int_{\mathbb{T}^3}
\biggl(\tfrac{1}{2}|v-v^{j-1}|^2 + G(\Xi)\biggr) dx
\end{equation*}
over the weakly closed affine subspace
\begin{equation*}
\begin{aligned}
\mathcal{C} = &\biggl\{ (v,\Xi)\in L^2 \times L^p \times L^2
\times
L^2: \ \mbox{such that} \ \forall \varphi \in C^{\infty}(\mathbb{T}^3)\\
& \, \int_{\mathbb{T}^3} \biggl(\frac{\Xi_{A} -
\Xi^{j-1}_{A}}{h}\biggr) \varphi \,dx = - \int_{\mathbb{T}^3}
\biggl(\pd{\Phi^{A}}{F_{i\alpha}}(F^{j-1}) v_i\biggr) \partial_{\alpha} \varphi \, dx \biggr\}.
\end{aligned}
\end{equation*}

\item[(P\,2)] For each $j\geqslant 1$ the iterates satisfy
\begin{equation}\label{DISCEXTSYS}
\begin{aligned}
\frac{v^j_i-v^{j-1}_i}{h}&=\partial_{\alpha}\biggl(\pd{G}{\Xi_{A}}(\Xi^j)\,\,\pd{\Phi^{A}}{F_{i\alpha}}(F^{j-1})\biggr)\\
\frac{\Xi^j_A-\Xi^{j-1}_A}{h}&=\partial_{\alpha}\biggl(\pd{\Phi^{A}}{F_{i\alpha}}(F^{j-1})\,v^j_i\biggr)
\end{aligned} \quad \mbox{in} \ \cD'(\mathbb{T}^3).
\end{equation}

\item[(P\,3)] If $F^0$ is a gradient, then so is $F^j\,$ 
 for all  $j \geqslant 1$.

\item[(P\,4)] Iterates $v^j$, $j\geqslant 1$ have higher
regularity: $v^j \in W^{1,p}(\mathbb{T}^3)$ 
for all $j\geqslant 1$.

\item[(P\,5)]There exists $E_0>0$ determined by the initial data
such that
\begin{equation}\label{ITERBOUND}
\begin{aligned}
\sup_{j \geqslant \, 0} \Bigl( \| v^j\|^2_{L^2_{dx}}
+\int_{\mathbb{T}^3} G(\Xi^j) \,dx \Bigr) + \sum_{j=1}^{\infty}
\|\Theta^j - \Theta^{j-1}\|_{L^2_{dx}}^2  \leqslant E_0.
\end{aligned}
\end{equation}

\end{litemize}


Given the sequence of spatial iterates $(v^j,\Xi^j)$, $j\geqslant
1$ we define  (following~\cite{DST}) the time-continuous,
piecewise linear interpolates $\Theta^{(h)}:=(V^{(h)}, \Xi^{(h)})$
by
\begin{equation}\label{CONTINTP}
\begin{aligned}
V^{(h)}(t)&=\sum^{\infty}_{j=1}\Chi^j(t) \Bigl(v^{j-1}+\frac{t-h(j-1)}{h}(v^j-v^{j-1})\Bigr)\\
\Xi^{(h)}(t)&=\bigl(F^{(h)},Z^{(h)},w^{(h)}\bigr)(t)\\
&=\sum^{\infty}_{j=1}\Chi^j(t)\Bigl(
\Xi^{j-1}+\frac{t-h(j-1)}{h}(\Xi^j - \Xi^{j-1})\Bigr),
\end{aligned}
\end{equation}
and the piecewise constant interpolates $\theta^{(h)}:=(v^{(h)},
\xi^{(h)})$ and $\tilde{f}^{(h)}$ by
\begin{equation}\label{CONSTINTP}
\begin{aligned}
v^{(h)}(t)&=\sum^{\infty}_{j=1}\Chi^j(t)v^j\\
\xi^{(h)}(t)&=(f^{(h)},z^{(h)},\omega^{(h)})(t)=\sum^{\infty}_{j=1}\Chi^j(t)\Xi^j\\
\tilde{f}^{(h)}(t)&=\sum^{\infty}_{j=1}\Chi^j(t)F^{j-1},
\end{aligned}
\end{equation}
where $\Chi^j(t)$ is the characteristic function of the interval
$I_j:=[(j-1)h,jh)$. Notice that $\tilde{f}^{(h)}$ is the
time-shifted version of $f^{(h)}$  and it is used later in
defining a relative entropy flux, as well as the time-continuous
equations \eqref{TCDSYS}.


Our main objective is to prove convergence of the interpolates
$(V^{(h)},F^{(h)})$ obtained via the variational scheme to the
solution of polyconvex elastodynamics as long as the limit
solution remains smooth. This is achieved by employing the
extended system \eqref{EXTSYS} and proving convergence of the
time-continuous approximates $\Theta^{(h)}=(V^{(h)},\Xi^{(h)})$ to
the solution $\bar{\Theta}=(\bar{V},\bar{\Xi})$ of the extension
\eqref{EXTSYS} as long as $\bar{\Theta}$ remains smooth.

\newtheorem*{MThm}{Main Theorem}
\begin{MThm}
Let $W$ be defined by \eqref{POLYCONVEXITY} with  $G$ satisfying
{\rm (H1)--(H5)}. Let $\Theta^{(h)}=(V^{(h)}, \Xi^{(h)})$,
$\theta^{(h)}=(v^{(h)}, \xi^{(h)})$ and $\tilde{f}^{(h)}$ be the
interpolates defined via \eqref{CONTINTP}, \eqref{CONSTINTP} and
induced by the sequence of spatial iterates 
\begin{equation*}
\Theta^j=(v^j,\Xi^j)=(v^j,F^j,Z^j,w^j) \in L^2 \times L^p \times
L^2 \times L^2, \quad j \geqslant 0
\end{equation*}
which satisfy {\rm (P1)--(P5)}. Let $\bar{\Theta}=(\bar{V}, \bar{\Xi})=(\bar{V},
\bar{F},\bar{Z},\bar{w})$ be { the} smooth solution of \eqref{EXTSYS} defined on
$\mathbb{T}^3 \times [0,T]$ and emanate from the data $\bar{\Theta}^0=(\bar{V}^0,
\bar{F}^0,\bar{Z}^0,\bar{w}^0)$. Assume also that ${F}^0,\bar{F}^0$ are gradients. Then:
\begin{itemize}
\item[\rm (a)] The relative entropy
$\eta^r=\eta^r(\Theta^{(h)},\bar{\Theta})$ defined by
\eqref{RENTDEF} satisfies \eqref{RENTID}. Furthermore, there exist
constants $\mu,\mu'>0$ such that
\begin{equation*}
\mu \, \mathcal{E}(t) \leqslant \int_{\mathbb{T}^3} \,\eta^r(x,t)\, dx \leqslant \mu'
\mathcal{E}(t), \quad t\in [0,T]
\end{equation*}
 where
\begin{equation*}
\mathcal{E}(t) := \int_{\mathbb{T}^3} \,\Bigl(
(1+|F^{(h)}|^{p-2}+|\bar{F}|^{p-2})
|F^{(h)}-{\bar{F}}|^2+|\Theta^{(h)}-\bar{\Theta}|^2 \Bigr) \, dx.
\end{equation*}
\item[\rm (b)] There exists $\varepsilon>0$ and
$C=C(T,\bar{\Theta},E_0,\mu,\mu',\varepsilon)>0$ such that 
for all $h\in(0,\varepsilon)$
\begin{equation*}
\begin{aligned}
\mathcal{E}(\tau) \leqslant C \,\bigl(\mathcal{E}(0) +h \bigr),
\quad \tau\in [0,T]  .
\end{aligned}
\end{equation*}
Moreover, if the data satisfy $\,\mathcal{E}^{(h)}(0) \to 0$ as
$h\downarrow 0$, then
\begin{equation*}
\sup_{t\in[0,T]}\int_{\mathbb{T}^3} \BBR{|\Theta^{(h)} -
\bar{\Theta}|^2 +
|F^{(h)}-\bar{F}|^2(1+|F^{(h)}|^{p-2}+|\bar{F}|^{p-2})}dx
\rightarrow 0
\end{equation*}
as $h\downarrow 0$.
\end{itemize}
\end{MThm}

\begin{corollary*}
Let $\Theta^{(h)}=(V^{(h)},\Xi^{(h)})$ be as in the Main Theorem.
Let $(\bar{V},\bar{F})$ be a smooth solution of \eqref{PXELASTSYS}
with $\bar{F}(\cdot,0)$ a gradient and
$\bar{\Theta}=(\bar{V},\Phi(\bar{F}))$. Assume that initial data
satisfy $\Theta^{(h)}(\cdot,0)=\bar{\Theta}(\cdot,0)$. Then
\begin{equation*}
 \sup_{t\in[0,T]}  \BBR{ \|V - \bar{V} \|^2_{L^2(\mathbb{T}^3)} + \|\Xi^{(h)} - \Phi(\bar{F}) \|^2_{L^2(\mathbb{T}^3)}  +
\|F^{(h)}-\bar{F}\|^p_{L^p(\mathbb{T}^3)} } = O(h).
\end{equation*}
\end{corollary*}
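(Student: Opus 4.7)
The strategy is to deduce the corollary as a direct consequence of the Main Theorem, after three preparatory observations have been carried out.

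First, I would verify that the hypotheses of the Main Theorem apply to the choice $\bar{\Theta} = (\bar{V},\Phi(\bar{F}))$. Since $(\bar{V},\bar{F})$ solves \eqref{PXELASTSYS} smoothly and $\bar{F}(\cdot,0)$ is a gradient, the involution \eqref{GRADCONSTR} is propagated, so $\bar{F}(\cdot,t)$ remains a gradient on $[0,T]$. Property (E\,2) of the extension then tells us that $\bar{\Xi}(\cdot,t) := \Phi(\bar{F}(\cdot,t))$ satisfies the transport identities \eqref{NLTID}, so $\bar{\Theta} = (\bar{V},\bar{\Xi})$ is a smooth solution of the enlarged system \eqref{EXTSYS} on $\mathbb{T}^3 \times [0,T]$ emanating from $\bar{\Theta}^0 = (\bar{V}^0,\Phi(\bar{F}^0))$, as required.

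Second, the assumption $\Theta^{(h)}(\cdot,0) = \bar{\Theta}(\cdot,0)$ yields $F^{(h)}(\cdot,0) = \bar{F}(\cdot,0)$ and $\Xi^{(h)}(\cdot,0) = \bar{\Xi}(\cdot,0)$. Inspecting the definition of $\mathcal{E}(t)$, every integrand vanishes identically at $t = 0$, so $\mathcal{E}(0) = 0$. Applying Main Theorem (b) with this initial value produces $\varepsilon>0$ and $C>0$ such that
\[
\sup_{t \in [0,T]} \mathcal{E}(t) \leqslant C h \qquad \text{for all } h \in (0,\varepsilon).
\]

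Finally, I would convert the bound on $\mathcal{E}(t)$ into the three norms in the statement. The terms $\|V^{(h)}-\bar{V}\|_{L^2}^2$ and $\|\Xi^{(h)}-\Phi(\bar{F})\|_{L^2}^2$ are dominated by $\mathcal{E}(t)$ directly, since $|\Theta^{(h)}-\bar{\Theta}|^2$ appears in the integrand of $\mathcal{E}$. For the $L^p$ control of $F^{(h)}-\bar{F}$, the only calculation to perform is the elementary pointwise inequality
\[
|a-b|^p \,\leqslant\, C_p\bigl(|a|^{p-2}+|b|^{p-2}\bigr)|a-b|^2, \qquad a,b \in \RR^9,\ p\geqslant 2,
\]
obtained from $|a-b|^{p-2} \leqslant 2^{p-3}(|a|^{p-2}+|b|^{p-2})$. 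Integrating over $\mathbb{T}^3$ gives $\|F^{(h)}(\cdot,t)-\bar{F}(\cdot,t)\|_{L^p}^p \leqslant C_p\,\mathcal{E}(t) = O(h)$, uniformly in $t \in [0,T]$. The Corollary is therefore a repackaging of the Main Theorem, and the sole step not already contained in its statement is this pointwise inequality, so no substantive obstacle arises.
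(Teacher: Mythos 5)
Your proposal is correct and is essentially the argument the paper intends: the Corollary is an immediate consequence of Main Theorem (b) once one notes that $\Theta^{(h)}(\cdot,0)=\bar{\Theta}(\cdot,0)$ forces $\mathcal{E}(0)=0$, and that $\|F^{(h)}-\bar{F}\|_{L^p}^p$ is dominated by the weighted term in $\mathcal{E}$ via $|a-b|^{p-2}\leqslant 2^{p-3}(|a|^{p-2}+|b|^{p-2})$. The only cosmetic remark is that the fact that $(\bar{V},\Phi(\bar{F}))$ solves \eqref{EXTSYS} is more properly justified by the propagation of the involution \eqref{GRADCONSTR} together with the transport identities \eqref{NLTID} (the embedding stated in Section 2) rather than by property (E\,2), which concerns solutions of the extended system; since you invoke \eqref{NLTID} in the same breath, this is a citation nicety, not a gap.
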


\begin{remark}
The smooth solution $\bar{\Theta}=(\bar{V},\bar{\Xi})$ to the extended system
\eqref{EXTSYSINTRO} is provided beforehand. A natural question arises whether such a
solution exists. We briefly discuss the existence theory for  \eqref{ELASTINTRO2} on the
torus $\mathbb{T}^3$. In \cite{DafermosHrusa85} energy methods are used to establish
local (in time) existence of smooth solutions to certain initial-boundary value problem
that apply to the system of nonlinear elastodynamics \eqref{ELASTINTRO1} with rank-1
convex stored energy. More precisely, for a bounded domain $\Omega\subset \RR^n$ with the
smooth boundary $\partial \Omega$ the authors establish (\cite[Theorem
5.2]{DafermosHrusa85}) the existence of the unique {motion} $y( \cdot,t)$ satisfying
\eqref{ELASTINTRO1} in $\Omega \times [0,T]$ together with boundary conditions $y(x,t)=0$
on $\partial\Omega \times [0,T]$ and initial conditions $y(\cdot,0)=y_0$ and
$y_t(\cdot,0)=y_1$ whenever $T>0$ is small enough and the initial data lie in a compact
set. One may get a counterpart of this result for solutions on $\mathbb{T}^3$ since the
methods in \cite{DafermosHrusa85} are developed in the abstract framework: a quasi-linear
partial differential equation is viewed as an abstract differential equation with initial
value problem set on an interpolated scale of separable Hilbert spaces
${\{H_{\gamma}\}}_{\gamma\in[0,m]}$ with $m \geqslant 2$. To be precise, the spaces
satisfy $H_{\gamma}=[H_0, H_m]_{\gamma/m}$ and the desired solution $u(t)$ of an abstract
differential equation is assumed to be taking values in $H_m \bigcap V$, where $V$, a
closed subspace of $H_1$, is designated to accommodate the boundary conditions
(cf.~\cite[Section~2]{DafermosHrusa85}). By choosing appropriate spaces, namely
\begin{equation*}
    H_{\gamma} = \left[L^2(\mathbb{T}^3), W^{m,2}(\mathbb{T}^3)
    \right]_{\gamma/m}  \quad \mbox{and} \quad
    V=H_1=W^{1,2}(\mathbb{T}^3),
\end{equation*}
and requiring strong ellipticity (cf.~\cite[Section 5]{DafermosHrusa85}) for the stored energy one may apply
\cite[Theorem 4.1]{DafermosHrusa85} to conclude the local existence of
smooth solutions on the torus $\mathbb{T}^3$ to the system of
elastodynamics \eqref{ELASTINTRO1} and hence to
\eqref{ELASTINTRO2}. Since strong polyconvexity implies strong
ellipticity \cite{Ball77}, the same conclusion holds for the case
of polyconvex energy which is used here.
\end{remark}

\begin{remark}
The framework for existence of measure-valued solutions for the polyconvex elasticity system
(see (H1)--(H4) of \cite{DST}) and that of uniqueness of classical within the class of measure-valued
solutions (see \cite{DST2}) is more general than the framework used in the Main Theorem.
This discrepancy is due to the relative entropy being  best adapted to an $L^2$ setting
and technical difficulties connected to the estimations of the time-step approximants of \eqref{DISCEXTSYS}.
Our approach,  based on using the "distance" function in \eqref{DISTDEF} as a substitute for
the relative entropy, simplifies the estimations but  limits applicability to stored energies
\eqref{POLYCONVEXITY}, \eqref{GDECOMP} with
 $L^p$-growth for $F$ but only $L^2$-growth in $\cof F$ and  $\det F$.
\end{remark}

\section{Relative entropy identity}

For the rest of the sequel, we suppress the dependence on $h$ to
simplify notations and, {\it cf.} Main Theorem, assume:
\begin{itemize}
\item[{(1)}] $\Theta=(V,\Xi)$, $\theta=(v,\xi)$, $\tilde{f}$ are the approximates defined
by \eqref{CONTINTP} and \eqref{CONSTINTP}.

\item[{(2)}]
$\bar{\Theta}=(\bar{V},\bar{\Xi})=(\bar{V},\bar{F},\bar{Z},\bar{w})$
is a smooth solution of \eqref{EXTSYS} defined on \lb
$\mathbb{T}^3 \times [0,T]$ where $T>0$ is finite.
\end{itemize}


The goal of this section is to derive an identity for a relative
energy among the two solutions. To this end, we define the
relative entropy
\begin{equation}\label{RENTDEF}
\eta^r(\Theta,\Bar{\Theta}):=\eta(\Theta)-\eta(\bar{\Theta})-\nabla\eta(\bar{\Theta})(\Theta-\bar{\Theta})
\end{equation}
and the associated relative flux which will turn out to be
\begin{equation} \label{RENTFLUX}
\begin{aligned}
q^r_{\alpha}(\theta,\Bar{\Theta},\tilde{f}):=(v_i-\bar{V}_i)\bigl(G_{,A}(\xi)-G_{,A}(\bar{\Xi})\bigr)
\, \Phi_{,i\alpha}^A(\tilde{f}), \quad \alpha = 1,2,3.
\end{aligned}
\end{equation}


We now state two elementary lemmas used in our further
computations. The first one extends the null-Lagrangian properties
while the second one provides the rule for the divergence of the
product in the non-smooth case.

\begin{lemma}[null-Lagrangian properties]\label{DIVPHIZLMM}
Assume $q>2$ and $r \geqslant \tfrac{q}{q-2}$. Then, if $u \in
W^{1,q}(\mathbb{T}^3;\RR^3)$, $z\in W^{1,r}(\mathbb{T}^3)$, we
have
\begin{equation*}
\begin{aligned}
\partial_{\alpha}\biggl(\PHD\BBR{\nabla{u}}\biggr)&=0\\
\partial_{\alpha}\biggl(\PHD(\nabla{u})z\biggr) &= \PHD(\nabla{u})\,\partial_{\alpha}z
\end{aligned} \quad \mbox{in} \ \cD'(\mathbb{T}^3)
\end{equation*}
for each $i=1,\dots,3$ and $A=1,\dots,19$.
\end{lemma}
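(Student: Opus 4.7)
The plan is to reduce both identities to their classical (smooth) analogues via approximation. Choose mollifications $u_n \in C^\infty(\mathbb{T}^3;\RR^3)$ with $u_n \to u$ in $W^{1,q}$ and, for the second identity, $z_n \in C^\infty(\mathbb{T}^3)$ with $z_n \to z$ in $W^{1,r}$. For smooth $u_n$ the null-Lagrangian property gives the pointwise identity $\partial_\alpha \PHD(\nabla u_n) = 0$, which is a classical consequence of the fact that the components of $\Phi = (F,\cof F,\det F)$ are null Lagrangians (the Piola identity for the cofactor block, and the analogous identity for the determinant). The second identity for smooth data is then just the Leibniz rule combined with this vanishing.

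To pass to the limit, observe that $\PHD$ is a polynomial of degree at most two in $F$: constant for the $F$-block, linear for the cofactor block, and equal to a cofactor entry (hence quadratic) for the determinant block. Elementary Hölder estimates then give continuity of the Nemytskii operator $F \mapsto \PHD(F)$ from $L^q(\mathbb{T}^3)$ into $L^{q/2}(\mathbb{T}^3)$, so that $\PHD(\nabla u_n) \to \PHD(\nabla u)$ in $L^{q/2}$. Testing $\partial_\alpha \PHD(\nabla u_n) = 0$ against an arbitrary $\varphi \in C^\infty(\mathbb{T}^3)$ and integrating by parts, the limit yields the first identity in $\cD'(\mathbb{T}^3)$.

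For the second identity I must verify that both sides of $\partial_\alpha \bigl(\PHD(\nabla u_n)\, z_n\bigr) = \PHD(\nabla u_n)\,\partial_\alpha z_n$ converge in $\cD'(\mathbb{T}^3)$. The left side reduces to convergence of $\PHD(\nabla u_n)\,z_n \to \PHD(\nabla u)\,z$ in $L^1$, which follows from $L^{q/2}$-convergence of the first factor together with the embedding $W^{1,r} \hookrightarrow L^{(q/2)'} = L^{q/(q-2)}$ on $\mathbb{T}^3$ (immediate since $r \geqslant q/(q-2)$). The right side reduces to convergence of $\PHD(\nabla u_n)\,\partial_\alpha z_n \to \PHD(\nabla u)\,\partial_\alpha z$ in $L^1$; by Hölder with exponents $q/2$ and $r$ this product is integrable precisely when $2/q + 1/r \leqslant 1$, i.e.\ precisely when $r \geqslant q/(q-2)$. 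This matching of exponents is the heart of the hypothesis and the only substantive obstacle; once it is in place the limit passes routinely term by term, yielding the second identity in $\cD'(\mathbb{T}^3)$.
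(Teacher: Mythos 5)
Your proof is correct: the mollification argument, the observation that $\PHD$ is a polynomial of degree at most two (so that $\nabla u_n \to \nabla u$ in $L^q$ gives $\PHD(\nabla u_n)\to\PHD(\nabla u)$ in $L^{q/2}$), and the exponent check $2/q+1/r\leqslant 1$ are exactly what is needed, and the limit passage in $\cD'(\mathbb{T}^3)$ is carried out properly. The paper states this lemma without proof as an elementary fact, and your density-plus-H\"older argument is the standard route it implicitly relies on, so there is nothing to correct or compare further.
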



\begin{lemma}[product rule]\label{DIVPDLMM}
Let $q \in (1,\infty)$ and $q'=\frac{q}{q-1}$. Assume
\begin{equation*}
f\in W^{1,q}(\mathbb{T}^3),\  h\in L^{q'}(\mathbb{T}^3;\RR^3)
\quad \mbox{and} \quad \mathrm{div} \,h \in L^{q'}(\mathbb{T}^3).
\end{equation*}
Then $fh\in L^1(\mathbb{T}^3;\RR^3)$, $\mathrm{div}\,(fh)\in
L^1(\mathbb{T}^3)$ and
\begin{equation*}
\mathrm{div}\,(fh)=f\mathrm{div}\,h+\nabla{f}h \quad \mbox{in}
\ \cD'(\mathbb{T}^3).
\end{equation*}
\end{lemma}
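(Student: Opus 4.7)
The lemma is a routine consequence of mollification together with the distributional definition of divergence, and I would carry it out in three steps. The integrability claims follow from H\"older's inequality with conjugate exponents $q,q'$: since $W^{1,q}(\mathbb{T}^3)\hookrightarrow L^q(\mathbb{T}^3)$, both $f$ and $\nabla f$ lie in $L^q$, so $fh\in L^1(\mathbb{T}^3;\RR^3)$, $f\,\mathrm{div}\,h\in L^1(\mathbb{T}^3)$, and $\nabla f\cdot h\in L^1(\mathbb{T}^3)$, which makes the right-hand side of the claimed identity well-defined in $L^1$.

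Next, I would approximate $f$ by its mollification $f_\varepsilon=f*\rho_\varepsilon\in C^\infty(\mathbb{T}^3)$ using a standard periodic mollifier, so that $f_\varepsilon\to f$ in $W^{1,q}(\mathbb{T}^3)$ as $\varepsilon\downarrow 0$. For any test function $\varphi\in C^\infty(\mathbb{T}^3)$ the product $f_\varepsilon\varphi$ lies in $C^\infty(\mathbb{T}^3)$, and writing $f_\varepsilon\nabla\varphi=\nabla(f_\varepsilon\varphi)-\varphi\,\nabla f_\varepsilon$ yields
\[
\langle\mathrm{div}(f_\varepsilon h),\varphi\rangle
=-\int_{\mathbb{T}^3} h\cdot\nabla(f_\varepsilon\varphi)\,dx+\int_{\mathbb{T}^3}(\nabla f_\varepsilon\cdot h)\,\varphi\,dx.
\]
Because $f_\varepsilon\varphi$ is smooth and $\mathrm{div}\,h\in L^{q'}(\mathbb{T}^3)$, the first integral on the right equals $\int_{\mathbb{T}^3}(\mathrm{div}\,h)\,f_\varepsilon\varphi\,dx$ by the very definition of the distributional divergence, so the identity $\mathrm{div}(f_\varepsilon h)=f_\varepsilon\,\mathrm{div}\,h+\nabla f_\varepsilon\cdot h$ holds in $\cD'(\mathbb{T}^3)$ for every $\varepsilon>0$.

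Finally, I would pass $\varepsilon\downarrow 0$ in the above identity tested against an arbitrary $\varphi\in C^\infty(\mathbb{T}^3)$. Since $f_\varepsilon\to f$ and $\nabla f_\varepsilon\to\nabla f$ in $L^q(\mathbb{T}^3)$, H\"older's inequality delivers $f_\varepsilon h\to fh$, $f_\varepsilon\,\mathrm{div}\,h\to f\,\mathrm{div}\,h$ and $\nabla f_\varepsilon\cdot h\to \nabla f\cdot h$ all in $L^1(\mathbb{T}^3)$, so both sides converge to the expected limits and the identity $\mathrm{div}(fh)=f\,\mathrm{div}\,h+\nabla f\cdot h$ follows in $\cD'(\mathbb{T}^3)$. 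There is no substantive obstacle to this argument; the one point requiring care is that $\mathrm{div}(f_\varepsilon h)$ must be interpreted distributionally because $h$ itself is not differentiable, and that is exactly what the test-function computation above records.
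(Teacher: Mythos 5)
Your argument is correct and complete: Hölder with the conjugate pair $q,q'$ gives the $L^1$ claims, the identity for the mollified $f_\varepsilon$ is justified precisely because $f_\varepsilon\varphi$ is an admissible smooth test function for the distributional divergence of $h$, and the passage $\varepsilon\downarrow 0$ uses only the $W^{1,q}$ convergence of $f_\varepsilon$ together with $h,\,\mathrm{div}\,h\in L^{q'}$. The paper treats this lemma as elementary and gives no proof, so there is nothing to compare against; your mollification-plus-H\"older argument is the standard one and settles the statement, the only delicate point (interpreting $\mathrm{div}(f_\varepsilon h)$ distributionally since $h$ is not differentiable) being handled correctly.
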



\begin{lemma}[relative entropy identity]\label{RENTIDLMM}
For almost all $t\in [0,T]$
\begin{equation}\label{RENTID}
\partial_t \eta^r-\mathrm{div}\,q^r
=Q-\frac{1}{h} \sum_{j=1}^{\infty} \Chi^j(t) D^j+S \quad \mbox{in}
\ \cD'(\mathbb{T}^3)
\end{equation}
where
\begin{equation}\label{QTERM}
\begin{aligned}
Q &:= \partial_{\alpha}(G_{,A}(\bar{\Xi}))
\bigl(\Phi_{,i\alpha}^A(F)-\Phi_{,i\alpha}^A(\bar{F})\bigr)
\bigl(V_i-\bar{V}_i\bigr)\\[1pt]
&\phantom{:=}\; +\partial_{\alpha}\bar{V}_i
\bigl(G_{,A}(\Xi)-G_{,A}(\bar{\Xi})\bigr)
\bigl(\Phi_{,i\alpha}^A(F)-\Phi_{,i\alpha}^A(\bar{F})\bigr)\\[1pt]
&\phantom{:=}\;+\partial_{\alpha}\bar{V}_i
\bigl(G_{,A}(\Xi)-G_{,A}(\bar{\Xi})-G_{,AB}(\bar{\Xi})(\Xi-\bar{\Xi})_B
\bigr)\Phi_{,i\alpha}^A(\bar{F})
\end{aligned}
\end{equation}
estimates the difference between the two solutions,
\begin{equation}\label{DTERM}
D^j:=\bigl(\nabla{\eta}(\theta)-\nabla{\eta}(\Theta)\bigr)
\delta\Theta^j,
\end{equation}
where $\delta\Theta^j:=\Theta^j - \Theta^{j-1}$, are the
dissipative terms, and
\begin{equation}\label{STERM}
\begin{aligned}
S := & \, \partial_{\alpha}(G_{,A}(\bar{\Xi}))\Bigl[ \,\Phi_{,i\alpha}^A(\bar{F})               \bigl(v_i-V_i\bigr)
+  \bigl(\Phi_{,i\alpha}^A(F)-\Phi_{,i\alpha}^A(\bar{F})\bigr)    \bigl(v_i-V_i\bigr)\\
&+ \bigl(\Phi_{,i\alpha}^A(\tilde{f})-\Phi_{,i\alpha}^A(F)\bigr)   \bigl(v_i-V_i\bigr)
+ \bigl(\Phi_{,i\alpha}^A(\tilde{f})-\Phi_{,i\alpha}^A(F)\bigr)   \bigl(V_i-\bar{V_i}\bigr) \Bigr]\\
&+\partial_{\alpha}\bar{V_i}\Bigl[ \bigl(G_{,A}(\xi)-G_{,A}(\Xi) \bigr)\Phi_{,i\alpha}^A(\bar{F})\\
&+ \bigl(G_{,A}(\xi)-G_{,A}(\Xi)\bigr)        \bigl(\Phi_{,i\alpha}^A(\tilde{f})-\Phi_{,i\alpha}^A(F)\bigr)\\
&+ \bigl(G_{,A}(\xi)-G_{,A}(\Xi)\bigr)        \bigl(\Phi_{,i\alpha}^A(F)-\Phi_{,i\alpha}^A(\bar{F})\bigr)  \\
&+ \bigl(G_{,A}(\Xi)-G_{,A}(\bar{\Xi})\bigr)  \bigl(\Phi_{,i\alpha}^A(\tilde{f})-\Phi_{,i\alpha}^A(F)\bigr)\Bigr]
\end{aligned}
\end{equation}
is the error term.
\end{lemma}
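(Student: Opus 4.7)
The natural starting point is the chain rule applied to the definition \eqref{RENTDEF}, which gives
\begin{equation*}
\partial_t\eta^r \,=\, \bigl[\nabla\eta(\Theta)-\nabla\eta(\bar\Theta)\bigr]\partial_t\Theta \,-\, \bigl[\nabla^2\eta(\bar\Theta)\partial_t\bar\Theta\bigr](\Theta-\bar\Theta).
\end{equation*}
Since $\Theta$ is piecewise linear in time, on each $I_j$ we have $\partial_t\Theta=\delta\Theta^j/h$, and the discrete scheme \eqref{DISCEXTSYS} (property (P2)) identifies these time differences with $\partial_\alpha g_{i\alpha}(\xi,\tilde f)$ in the velocity slot and $\partial_\alpha(\Phi^A_{,i\alpha}(\tilde f)v_i)$ in the $\Xi$ slot. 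In order to substitute, I would first split the first bracket as $[\nabla\eta(\theta)-\nabla\eta(\bar\Theta)]\partial_t\Theta + [\nabla\eta(\Theta)-\nabla\eta(\theta)]\partial_t\Theta$, so that the spatial iterates appearing in \eqref{DISCEXTSYS} become visible. The second piece, on $I_j$, equals $[\nabla\eta(\Theta)-\nabla\eta(\Theta^j)]\delta\Theta^j/h = -D^j/h$ by \eqref{DTERM}, which contributes the dissipation term $-h^{-1}\sum_j\Chi^j D^j$.

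For the first piece I would invoke the distributional product rule (Lemma \ref{DIVPDLMM}) and the null-Lagrangian identity (Lemma \ref{DIVPHIZLMM}); both are legitimate because $\tilde f=F^{j-1}$ is a gradient by (P3), $v^j\in W^{1,p}$ by (P4), and $G_{,A}(\xi)$ enjoys the appropriate $L^{q'}$ integrability from (H4). Computing
\begin{align*}
(v_i-\bar V_i)\partial_\alpha g_{i\alpha}(\xi,\tilde f)
&= \partial_\alpha\bigl[(v_i-\bar V_i)G_{,A}(\xi)\Phi^A_{,i\alpha}(\tilde f)\bigr] - G_{,A}(\xi)\Phi^A_{,i\alpha}(\tilde f)\partial_\alpha(v_i-\bar V_i),\\
(G_{,A}(\xi)-G_{,A}(\bar\Xi))\partial_\alpha(\Phi^A_{,i\alpha}(\tilde f)v_i)
&= (G_{,A}(\xi)-G_{,A}(\bar\Xi))\Phi^A_{,i\alpha}(\tilde f)\partial_\alpha v_i,
\end{align*}
then adding and subtracting $G_{,A}(\bar\Xi)\Phi^A_{,i\alpha}(\tilde f)\partial_\alpha(v_i-\bar V_i)$ so as to extract $\partial_\alpha q^r_\alpha$ via \eqref{RENTFLUX} (shifting the divergence across $\Phi^A_{,i\alpha}(\tilde f)$ by the null-Lagrangian property), leaves behind exactly the two terms
\begin{equation*}
(G_{,A}(\xi)-G_{,A}(\bar\Xi))\,\Phi^A_{,i\alpha}(\tilde f)\,\partial_\alpha\bar V_i \,+\, (v_i-\bar V_i)\,\Phi^A_{,i\alpha}(\tilde f)\,\partial_\alpha G_{,A}(\bar\Xi).
\end{equation*}
The Hessian contribution, after inserting \eqref{EXTSYS} for the smooth $\bar\Theta$ and once more using $\partial_\alpha\Phi^A_{,i\alpha}(\bar F)=0$, reduces to $\Phi^A_{,i\alpha}(\bar F)\partial_\alpha G_{,A}(\bar\Xi)(V_i-\bar V_i) + G_{,AB}(\bar\Xi)\Phi^B_{,i\alpha}(\bar F)\partial_\alpha\bar V_i\,(\Xi-\bar\Xi)_A$.

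What remains is purely algebraic bookkeeping. I would telescope each mismatched argument as $(v-\bar V)=(v-V)+(V-\bar V)$, $(\xi-\bar\Xi)=(\xi-\Xi)+(\Xi-\bar\Xi)$, and $(\tilde f-\bar F)=(\tilde f-F)+(F-\bar F)$, and regroup. The sub-terms involving only the ``smooth-versus-interpolate'' differences $(V-\bar V),(\Xi-\bar\Xi),(F-\bar F)$, combined with the two Hessian contributions, reproduce the three summands of $Q$ in \eqref{QTERM}; in particular the Taylor remainder $G_{,A}(\Xi)-G_{,A}(\bar\Xi)-G_{,AB}(\bar\Xi)(\Xi-\bar\Xi)_B$ emerges from cancellation against the $(\Xi-\bar\Xi)\,\Phi^A_{,i\alpha}(\bar F)\,\partial_\alpha\bar V_i$ piece. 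All residual cross-terms contain at least one of the ``interpolation'' differences $(v-V),(\xi-\Xi),(\tilde f-F)$ and, upon matching, fill out precisely the eight summands of $S$ in \eqref{STERM}. I expect the main obstacle to be not conceptual but rather the careful bookkeeping to ensure every cross-term lands in its designated slot, combined with checking at each step that the formal manipulation is justified in $\cD'(\mathbb{T}^3)$ by Lemmas \ref{DIVPHIZLMM} and \ref{DIVPDLMM} under the prevailing $L^p$ regularity of the iterates.
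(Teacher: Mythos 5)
Your proposal is correct and follows essentially the same route as the paper: it uses the time-continuous form \eqref{TCDSYS} of the scheme together with Lemmas \ref{DIVPHIZLMM} and \ref{DIVPDLMM}, extracts the dissipation $-h^{-1}\sum_j\Chi^j D^j$ from the difference $\nabla\eta(\Theta)-\nabla\eta(\theta)$, identifies the flux \eqref{RENTFLUX}, and your intermediate expression (the two leftover terms plus the Hessian contribution) coincides with the paper's term $J$ in \eqref{JTERM}, whose telescoping indeed yields the three summands of $Q$ and the eight of $S$. The only difference is organizational — you apply the chain rule directly to $\eta^r$, whereas the paper computes $\partial_t\eta(\Theta)$, $\partial_t\bigl(\bar V_i(V_i-\bar V_i)\bigr)$ and $\partial_t\bigl(G_{,A}(\bar\Xi)(\Xi-\bar\Xi)_A\bigr)$ separately — which does not change the substance of the argument.
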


\begin{proof}
Notice that by \eqref{CONTINTP} for almost all $t \geqslant 0$
\begin{equation}\label{TIMEDVVXI}
\begin{aligned}
\partial_{t}V(\cdot,t) &=\sum_{j=1}^{\infty}\Chi^j(t)\frac{\delta v^j}{h}, \quad \delta v^j:={v^j-v^{j-1}}\\
\partial_{t}\Xi(\cdot,t)&=\sum_{j=1}^{\infty}\Chi^j(t)\frac{\delta
\Xi^j}{h},\quad \delta \Xi^j:=\Xi^j - \Xi^{j-1}.
\end{aligned}
\end{equation}
Hence by \eqref{GCOMPFLD}, \eqref{DISCEXTSYS} and
\eqref{TIMEDVVXI} we obtain for almost all $t \geqslant 0$
\begin{equation}\label{TCDSYS}
\begin{aligned}
\partial_{t} V_i(\cdot,t)&=\mathrm{div}
\bigl( g_i(\xi,\tilde{f})\bigr)\\[0pt]
\partial_{t} \Xi_{A}(\cdot,t)&=\partial_{\alpha}\bigl( \hspace{1pt} \Phi^A_{,i\alpha}(\tilde{f})\,v_i\bigr)
\end{aligned}\quad \mbox{in} \ \cD'(\mathbb{T}^3).
\end{equation}
Since $(\bar{V},\bar{\Xi})$ is the smooth solution of
\eqref{EXTSYS}, using \eqref{GCOMPFLD} we also have
\begin{equation}\label{EXTSYS2}
\begin{aligned}
\partial_{t}\bar{V}_i &= \mathrm{div} \bigl( \hspace{1pt} g_i(\bar{\Xi},\bar{F}) \bigr)
\\
\partial_{t}\bar{\Xi}_{A}&=\partial_{\alpha}\bigl(\Phi^A_{,i\alpha}(\bar{F})\,\bar{V}_{i}\bigr)
\end{aligned} \quad \mbox{in} \ \mathbb{T}^3 \times [0,T].
\end{equation}


Further in the proof we will perform a series of calculations that
hold for smooth functions. A technical difficulty arises, since
the iterates $(v^j,\Xi^j)$, $j\geqslant 1$ satisfying
\eqref{DISCEXTSYS} are, in general, not smooth. To bypass this we
employ Lemmas~\ref{DIVPHIZLMM} and \ref{DIVPDLMM} that provide the
null-Lagrangian property and product rule in the smoothness class
appropriate for the approximates $\Theta \!=\! (V,\Xi)$,
$\theta\!= \!(v,\xi)$, $\tilde{f}$.


By assumption $F^0$ and $\bar{F}^0$ are gradients. Hence using
(P\,3) we conclude that $F^j$, $j\geqslant 1$ are gradients.
Furthermore, from (E1) it follows that $\bar{F}$ remains a
gradient 
 for all $t$. Thus, recalling
\eqref{CONTINTP}, \eqref{CONSTINTP}, we have
\begin{equation}\label{FGRADPROP}
\mbox{$F$, $f$, $\tilde{f}$ and $\bar{F}$ are gradients 
for all $t
\in [0,T]$}.
\end{equation}
We also notice that by \eqref{PHIDEF}, \eqref{GCOMPFLD}, and (H4)
we have for all $F^{*}\in \RR^9$, $\Xi^{\circ}\in \RR^{19}$
    \begin{equation}\label{gEST}
\begin{aligned}
\bigl|g_{i\alpha}\bigl(\Xi^{\circ},F^*\bigr)\bigr|^{p'}
&\leqslant
C_g\Bigl(\,\Bigl|\pd{G}{F_{i\alpha}}\Bigr|^{\frac{p}{p-1}}+\bigl|F^*\bigr|^{\frac{p}{p-1}}
\Bigl|\pd{G}{Z_{k\gamma}}\Bigr|^{\frac{p}{p-1}}+\bigl|F^*\bigr|^{\frac{2p}{p-1}}
\Bigl|\pd{G}{w}\Bigr|^{\frac{p}{p-1}}\Bigr)\\[1pt]
&\leqslant
C_g'\Bigl(\,|F^*|^p+\Bigl|\pd{G}{F_{i\alpha}}\Bigr|^{\frac{p}{p-1}}+
\Bigl|\pd{G}{Z_{k\gamma}}\Bigr|^{\frac{p}{p-2}}+\Bigl|\pd{G}{w}\Bigr|^{\frac{p}{p-3}}\Bigl)\\[1pt]
&\leqslant
C_g''\Bigl(|F^*|^p+|F^{\circ}|^p+|Z^{\circ}|^2+|w^{\circ}|^2+1\Bigr)
\end{aligned}
\end{equation}
where $p\in{[6,\infty)}$ and $p'=\tfrac{p}{p-1}$. Hence (H2), (P4)--(P5),
\eqref{CONSTINTP}$_1$ and Lemmas~\ref{DIVPHIZLMM}, \ref{DIVPDLMM} along with
\eqref{TCDSYS}$_1$ imply
\begin{equation}\label{DIVPRODUCTS}
\begin{aligned}
\mathrm{div} \bigl(v_ig_i(\xi,\tilde{f})\bigr) &=
{v_i}{\partial_{t} V_i} + \nabla{v_i}
g_i(\xi,\tilde{f})\\
\mathrm{div}\bigl(\bar{V}_i g_i(\xi,\tilde{f})\bigr) &= \bar{V}_i
\partial_t V_i + \nabla{\bar{V}_i} g_i\,(\xi,\tilde{f})\\
\mathrm{div}\bigl(v_ig_i(\bar{\Xi},\tilde{f})\bigr) &= v_i
\Phi_{,i\alpha}^A(\tilde{f})\,\partial_\alpha(G_{,A}(\bar{\Xi}))+\nabla{v_i}
g_i(\bar{\Xi},\tilde{f})\\
\mathrm{div}\bigl(\bar{V}_ig_i(\bar{\Xi},\tilde{f})\bigr) &=
\bar{V}_i
\Phi_{,i\alpha}^A(\tilde{f})\,\partial_\alpha(G_{,A}(\bar{\Xi}))+\nabla{\bar{V}_i}
g_i(\bar{\Xi},\tilde{f}).\\
\end{aligned}
\end{equation}
Similarly, by (P4), Lemma \ref{DIVPHIZLMM}, \eqref{TCDSYS}$_2$ and
\eqref{FGRADPROP} we have the identity
\begin{equation}\label{NLPAPPL1}
\partial_t\Xi_A(t)=\Phi^A_{,i\alpha}(\tilde{f})\,\partial_{\alpha}v_i.\\
\end{equation}


Thus, using \eqref{ENTDEF}, \eqref{DIVPRODUCTS}$_1$ and
\eqref{NLPAPPL1}, we compute
\begin{equation*}
\begin{aligned}
\partial_t \bigl(\eta(\Theta)\bigr) & = V_i \partial_t{V_i}+G_{,A}(\Xi) \partial_t\Xi_A\\[0pt]
&=(V_i-v_i) \partial_t V_i+(G_{,A}(\Xi)-G_{,A}(\xi))
\partial_t \Xi_A +
\mathrm{div}\bigl({v_i g_i(\xi,\tilde f)}\bigr)\\[0pt]
&= \frac{1}{h}\sum_{j=1}^{\infty} \Chi^j(t)
\bigl(\nabla{\eta(\Theta)}-\nabla{\eta(\theta)}\bigr)
\delta\Theta^j + \mathrm{div}\bigl({v_i g_i(\xi,\tilde f)}\bigr).
\end{aligned}
\end{equation*}
Furthermore, by \eqref{DIVPRODUCTS}$_2$ we have
$
\partial_t\bigl(\bar{V}_i(V_i-\bar{V}_i)\bigr)
= \partial_t\bar{V}_i (V_i-\bar{V}_i) + \bar{V}_i
\partial_t V_i - \bar{V}_i \hspace{1pt}
\partial_t \bar{V}_i
=\partial_t \bar{V}_i (V_i-\bar{V}_i) +
\mathrm{div}\bigl(\bar{V}_ig_i(\xi,\tilde{f})\bigr) -
\nabla{\bar{V}_i} g_i(\xi,\tilde{f}) - \tfrac{1}{2}
\partial_t \bar{V}^2
$
while using \eqref{NLPAPPL1} we obtain
\begin{equation*}
\begin{aligned}
\partial_t (G_{,A}(\bar{\Xi})(\Xi-\bar{\Xi})_A)&= \partial_t (G_{,A}(\bar{\Xi}))(\Xi-\bar{\Xi})_A +
G_{,A}(\bar{\Xi}) \partial_t \Xi_A  - \partial_t (G(\bar{\Xi}))\\
&= \partial_t (G_{,A}(\bar{\Xi}))(\Xi-\bar{\Xi})_A + \nabla{v_i}
g_i(\bar{\Xi},\tilde{f})-
\partial_t (G(\bar{\Xi})).
\end{aligned}
\end{equation*}
Next, notice that by \eqref{GCOMPFLD} and \eqref{RENTFLUX} we have
\begin{equation}\label{RENTFULX1}
    q^r = v_i g_i(\xi,\tilde{f}) - \bar{V}_i g_i(\xi,\tilde{f}) -
    v_i g_i(\bar{\Xi},\tilde{f}) + \bar{V}_i
    g_i(\bar{\Xi},\tilde{f}).
\end{equation}
Hence by \eqref{ENTDEF}, \eqref{RENTDEF}, \eqref{DTERM},
\eqref{DIVPRODUCTS} and the last four identities we obtain
\begin{equation}\label{RELENTID1}
\partial \eta^r - \mathrm{div} \, q^r =
-\frac{1}{h}\sum_{j=1}^{\infty} \Chi^j(t) D^j + J
\end{equation}
where
\begin{equation*}
\begin{aligned}
J:= & - \mathrm{div}\bigl(\bar{V}_i g_i(\bar{\Xi},\tilde{f})\bigr)+
\nabla{\bar{V}_i} g_i(\xi,\tilde{f})
+ \mathrm{div}\bigl(v_i g_i(\bar{\Xi},\tilde{f})\bigr) -
\nabla{v_i}
g_i(\bar{\Xi},\tilde{f}) \\
& - \partial_t \bar{V}_i (V_i-\bar{V}_i) -
\partial_t (G_{,A}(\bar{\Xi}))(\Xi-\bar{\Xi})_A.
\end{aligned}
\end{equation*}


Consider now the term $J$. From \eqref{EXTSYS2}, \eqref{FGRADPROP}
and Lemma \ref{DIVPHIZLMM} it follows that
$
\partial_t \bar{V}_i = \Phi^A_{,i\alpha}(\bar{F})
\partial_{\alpha} (G_{,A}(\Bar{\Xi})),$
$
\partial_t (G_{,A}(\bar{\Xi})) = G_{,AB}(\Bar{\Xi}) \Phi^{B}_{,i\alpha}(\bar{F})
\partial_{\alpha} \bar{V}_i.
$
Then, \eqref{DIVPRODUCTS}$_{3,4}$ along with the last two
identities and the fact that $G_{,AB}=G_{,BA}$ implies
\begin{equation}\label{JTERM}
\begin{aligned}
J&= \partial_{\alpha}\bar{V}_i
\Bigl(g_{i\alpha}(\xi,\tilde{f})-g_{i\alpha}(\bar{\Xi},\tilde{f})\Bigr)\\[0pt]
&\quad+\partial_{\alpha}(G_{,A}(\bar{\Xi}))\Bigl(\Phi_{,i\alpha}^A(\tilde{f})(v_i-\bar{V}_i)-\Phi_{,i\alpha}^A(\bar{F})(V_i-\bar{V}_i)\Bigr)\\[0pt]
&\quad-G_{,AB}(\bar{\Xi})(\Xi-\bar{\Xi})_A
\Phi_{,i\alpha}^B(\bar{F})\,
\partial_{\alpha}{\bar{V}}_i\\[0pt]
&=\partial_{\alpha}\bar{V}_i \Bigl(g_{i\alpha}(\xi,\tilde{f})-g_{i\alpha}(\bar{\Xi},\tilde{f}) - g_{i\alpha}(\Xi,\bar{F}) + g_{i\alpha}(\bar{\Xi},\bar{F}) \Bigr)\\[0pt]
&\quad+\partial_{\alpha}(G_{,A}(\bar{\Xi}))\Bigl(\Phi_{,i\alpha}^A(\tilde{f})(v_i-\bar{V}_i)-\Phi_{,i\alpha}^A(\bar{F})(V_i-\bar{V}_i)\Bigr)\\[0pt]
&\quad+\partial_{\alpha}\bar{V}_i\Bigl(G_{,A}(\Xi)-G_{,A}(\bar{\Xi})-G_{,AB}(\bar{\Xi})(\Xi-\bar{\Xi})_B\Bigr)\Phi_{,i\alpha}^A(\bar{F})\\
&=:J_1+J_2+J_3.
\end{aligned}
\end{equation}
Using \eqref{GCOMPFLD} we rearrange the term $J_1$ as follows:
\begin{equation}\label{J1}
\begin{aligned}
J_1  &= \partial_{\alpha}\bar{V}_i
\Bigl[ \bigl(G_{,A}(\xi)\!-\! G_{,A}(\bar{\Xi})\bigr) \Phi^A_{,i\alpha}(\tilde{f}) \!-\! \bigl(G_{,A}(\Xi)\!-\! G_{,A}(\bar{\Xi})\bigr) \Phi^A_{,i\alpha}(\bar{F})\Bigr]\\[0pt]
&= \partial_{\alpha}\bar{V}_i \Bigl[ \bigl(G_{,A}(\xi)\!-\!
G_{,A}(\Xi)\bigr) \bigl( \Phi^A_{,i\alpha}(\tilde{f}) \!-\!
\Phi^A_{,i\alpha}(F) \bigr ) \\
& \quad +\! \bigl(G_{,A}(\xi)\!-\! G_{,A}(\Xi)\bigr) \bigl(
\Phi^A_{,i\alpha}(F) \!- \!\Phi^A_{,i\alpha}(\bar{F}) \bigr )
\!+ \!\bigl(G_{,A}(\xi)\!- \!G_{,A}(\Xi)\bigr)
\Phi^A_{,i\alpha}(\bar{F})\\
& \quad + \!\bigl(G_{,A}(\Xi)\!-\! G_{,A}(\bar{\Xi})\bigr)
\bigl(
\Phi^A_{,i\alpha}(\tilde{f}) \!-\! \Phi^A_{,i\alpha}(F) \bigr ) \\
& \quad + \!\bigl(G_{,A}(\Xi)\!-\! G_{,A}(\bar{\Xi})\bigr)
\bigl( \Phi^A_{,i\alpha}(F) \!-\! \Phi^A_{,i\alpha}(\bar{F}) \bigr )
\Bigr].
\end{aligned}
\end{equation}
We also modify the term $J_2$ writing it in the following way:
\begin{equation}\label{J2}
\begin{aligned}
J_2 & = \partial_{\alpha}(G_{,A}(\bar{\Xi})) \Bigl[
\Phi_{,i\alpha}^A(\tilde{f}) (v_i-\bar{V}_i) -
\Phi_{,i\alpha}^A(\bar{F})(V_i-\bar{V}_i) \Big]\\[0pt]
& = \partial_{\alpha}(G_{,A}(\bar{\Xi})) \Bigl[
\bigl(\Phi_{,i\alpha}^A(F)-\Phi_{,i\alpha}^A(\bar{F}) \bigr)
\bigl(V_i-\bar{V}_i\bigr) \\
& \quad +
\bigl(\Phi_{,i\alpha}^A(\tilde{f})-\Phi_{,i\alpha}^A(F) \bigr)
\bigl(V_i- \bar{V}_i\bigr)
+
\bigl(\Phi_{,i\alpha}^A(\tilde{f})-\Phi_{,i\alpha}^A(F) \bigr)
\bigl(v_i- V_i\bigr) \\
& \quad +
\bigl(\Phi_{,i\alpha}^A(F)-\Phi_{,i\alpha}^A(\bar{F}) \bigr)
\bigl(v_i- V_i\bigr)
+ \Phi_{,i\alpha}^A(\bar{F})
\bigl(v_i- V_i\bigr) \Bigr].
\end{aligned}
\end{equation}
By \eqref{JTERM}--\eqref{J2} we have $ J = J_1 + J_2 + J_3 = Q+S$.
Hence by \eqref{RELENTID1} we get \eqref{RENTID}.
\qed\end{proof}

\section{Proof of the Main Theorem}

The identity \eqref{RENTID} is central to our paper. In this
section, we estimate each of its terms and complete the proof via
Gronwall's inequality.

\subsection{A function $d(\cdot,\cdot)$ equivalent to the relative entropy }

\begin{definition*}
Let $\Theta_1=(V_1,\Xi_1),\Theta_2=(V_2,\Xi_2) \in\RR^{22}$. We
set
\begin{equation}\label{DISTDEF}
d(\Theta_1,\Theta_2) =
\BBR{1+|F_1|^{p-2}+|F_2|^{p-2}}\BBA{F_1-{F_2}}^2+\BBA{\Theta_1-{\Theta_2}}^2
\end{equation}
where $(F_1,Z_1,w_1)=\Xi_1,(F_2,Z_2,w_2)=\Xi_2\in \RR^{19}$.
\end{definition*}


The goal of this section is to show that the relative entropy
$\eta^r$ can be equivalently represented by the function
$d(\cdot,\cdot)$. Before we establish this relation, we prove an
elementary lemma used in our further calculations:
\begin{lemma}\label{AVESTLMM}
Assume $q \geqslant 1$. Then for all $u,v\in \RR^n$ and
$\bar{\beta} \in [0,1]$
\begin{equation}\label{AVEST2}
\int_{0}^{\bar{\beta}}\int_{0}^{1}(1-\beta)\BBA{u+\alpha(1-\beta)(v-u)}^q
d\alpha \,d\beta \, \geqslant\, c' \bar{\beta}  \bigl(|u|^q+|v|^q
\bigr)
\end{equation}
with constant $c'>0$ depending only on $q$ and $n$.
\end{lemma}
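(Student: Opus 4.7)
The plan is to reduce the double integral to a one-variable quantity via a change of variables, dispose of the degeneracy near $\bar\beta=1$ by a short dichotomy, and then close the resulting one-variable estimate by a scaling/compactness argument on a finite-dimensional sphere.

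First I would substitute $s=\alpha(1-\beta)$ in the $\alpha$-integral for each fixed $\beta\in[0,\bar\beta)$. Since $ds=(1-\beta)\,d\alpha$ with $s$ sweeping $[0,1-\beta]$, the prefactor $(1-\beta)$ is absorbed, and using the identity $u+\alpha(1-\beta)(v-u)=(1-s)u+sv$, the quantity to bound below becomes
\begin{equation*}
I(\bar\beta) := \int_0^{\bar\beta}\int_0^{1-\beta}\BBA{(1-s)u+sv}^q \, ds\, d\beta
\end{equation*}
(the endpoint $\beta=1$ contributes nothing because the original integrand vanishes there).

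Next, I would split on whether $\bar\beta\leqslant \tfrac{1}{2}$ or $\bar\beta>\tfrac{1}{2}$. In the first case, $1-\beta\geqslant \tfrac{1}{2}$ throughout the outer integration, so truncating the inner integral at $1/2$ yields $I(\bar\beta)\geqslant \bar\beta\,G(u,v)$, where
\begin{equation*}
G(u,v) := \int_0^{1/2}\BBA{(1-s)u+sv}^q \, ds.
\end{equation*}
In the second case, monotonicity of $I$ in $\bar\beta$ combined with the same estimate at $\bar\beta=\tfrac{1}{2}$ and the trivial bound $\bar\beta\leqslant 1$ gives $I(\bar\beta)\geqslant I(\tfrac{1}{2})\geqslant \tfrac{1}{2}G(u,v)\geqslant \tfrac{1}{2}\bar\beta\,G(u,v)$. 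Both cases therefore reduce to producing a constant $c=c(q,n)>0$ with $G(u,v)\geqslant c(|u|^q+|v|^q)$ for all $u,v\in\RR^n$.

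The final step is this one-variable lower bound. The map $G$ is continuous on $\RR^n\times\RR^n$ and positively $q$-homogeneous, so the ratio $G(u,v)/(|u|^q+|v|^q)$ is scale-invariant and continuous on the compact sphere $\{(u,v)\in\RR^{2n}:|u|^q+|v|^q=1\}$. On this sphere $G$ cannot vanish: $G(u,v)=0$ would force $(1-s)u+sv=0$ for almost every, hence by continuity every, $s\in[0,1/2]$, whence varying $s$ gives $u=v=0$, contradicting the normalization. Compactness then delivers a strictly positive lower bound $c=c(q,n)$, producing \eqref{AVEST2} with, say, $c'=c/2$. I expect no serious obstacle; the step that needs most care is ensuring the final constant depends \emph{only} on $q$ and $n$, which follows automatically because both the normalizing sphere and the functional $G$ depend only on these parameters.
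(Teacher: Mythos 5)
Your proposal is correct, but it takes a genuinely different route from the paper. You change variables $s=\alpha(1-\beta)$ to absorb the weight $(1-\beta)$, dispose of the dependence on $\bar\beta$ by a dichotomy ($\bar\beta\leqslant\tfrac12$ versus $\bar\beta>\tfrac12$, using monotonicity of the integral in $\bar\beta$), and then obtain the core lower bound $\int_0^{1/2}|(1-s)u+sv|^q\,ds\geqslant c\,(|u|^q+|v|^q)$ by homogeneity plus compactness of the level set $\{|u|^q+|v|^q=1\}$, noting that the integral can vanish only if $u=v=0$; all steps check out, and the constant indeed depends only on $q$ and $n$. The paper instead keeps the original variables, applies Jensen's inequality to reduce the inner integral to the case $q=1$, and invokes an explicit elementary averaging bound $\int_0^1|u+\alpha(v-u)|\,d\alpha\geqslant\tfrac{1}{4\sqrt n}(|u|+|v|)$, followed by the elementary estimate $\int_0^{\bar\beta}(1-\beta)^{q+1}d\beta\geqslant\tfrac{\bar\beta}{q+2}$; this yields a fully explicit constant $c'=\tfrac{1}{2(q+2)}\bigl(\tfrac{1}{4\sqrt n}\bigr)^{q}$ and avoids any compactness argument. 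Your approach is softer and arguably more flexible (the same scheme works for any continuous $q$-homogeneous functional vanishing only at the origin), at the price of a non-explicit constant — which is harmless here, since the lemma and its uses (Lemmas \ref{RENTEQUIVLMM} and \ref{DTERMESTLMM}) only require positivity of $c'$ and independence of $h$ and $j$ — whereas the paper's argument is quantitative and self-contained at the level of explicit constants.
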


\begin{proof} Observe first that
\begin{equation} \label{AVEST1}
\int_{0}^{1}|u+\alpha(v-u)| \, d\alpha\,\geqslant\,
\bar{c}\BBR{|u|+|v|}, \quad \forall u,v \in \RR^n
\end{equation}
with  $\bar{c}=\frac{1}{4\sqrt{n}}$. Then, applying Jensen's
inequality and using \eqref{AVEST1}, we get
\begin{equation*}
\begin{aligned}
&\int_{0}^{\bar{\beta}}\int_{0}^{1}(1-\beta)\bigl|u+\alpha(1-\beta)(v-u)\bigr|^q d\alpha\,d\beta\\
& \geqslant \,\int_{0}^{\bar{\beta}}(1-\beta)\biggl(\int_{0}^{1}\bigl|u+\alpha\bigl((1-\beta)v+\beta u-u\bigr)\bigr|\,d\alpha\biggr)^q d\beta\\
&\geqslant \, \bar{c}^q  \int_{0}^{\bar{\beta}}(1-\beta)  \bigl( { |u|+|(1-\beta)v+\beta u}| \bigr) ^q d\beta\\
& \geqslant \, \frac{\bar{c}^q}{2} \bigl(|u|^q+|v|^q\bigr) \int_{0}^{\bar{\beta}}(1-\beta)^{q+1} \,d\beta.\\
\end{aligned}
\end{equation*}
Since $q\geqslant 1$ and $(1-\bar{\beta})\in [0,1]$, we have
$
\int_{0}^{\bar{\beta}}(1-\beta)^{q+1}d\beta \, = \, \frac{1 -
(1-\bar{\beta})^{q+2} }{q+2} \, \geqslant \,
\frac{\bar{\beta}}{q+2}.
$
Combining the last two inequalities we obtain \eqref{AVEST2}.
\qed\end{proof}


\begin{lemma}[$\eta^r$-equivalence]\label{RENTEQUIVLMM}
There exist constants $\mu,\mu'>0$ such that
\begin{equation}\label{RENTEQUIVD}
\begin{aligned}
\mu \, d(\Theta_1,\Theta_2) \, \leqslant
\,\eta^r(\Theta_1,\Theta_2) \, \leqslant \, \mu'
d(\Theta_1,\Theta_2)
\end{aligned}
\end{equation}
for every $\Theta_1=(V_1,\Xi_1), \Theta_2=(V_2,\Xi_2)
\in\RR^{22}$.
\end{lemma}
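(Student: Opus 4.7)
The approach is to split the relative entropy into its three natural pieces and show each is equivalent to the corresponding piece of $d$. Since $\eta(\Theta)=\tfrac{1}{2}|V|^{2}+G(\Xi)$ and $G=H+R$ (with $H$ a function of $F$ only, by (H1)), the cross derivative $\nabla G(\Xi_{2})(\Xi_{1}-\Xi_{2})$ splits as $\nabla H(F_{2})(F_{1}-F_{2})+\nabla R(\Xi_{2})(\Xi_{1}-\Xi_{2})$, so
\begin{equation*}
\eta^{r}(\Theta_{1},\Theta_{2})=\tfrac{1}{2}|V_{1}-V_{2}|^{2}+\eta^{r}_{H}+\eta^{r}_{R},
\end{equation*}
where $\eta^{r}_{H}:=H(F_{1})-H(F_{2})-\nabla H(F_{2})(F_{1}-F_{2})$ and $\eta^{r}_{R}$ is the analogous quantity for $R$. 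The plan is to bound each of the three summands from above and below and then compare with $d(\Theta_{1},\Theta_{2})$, noting that $|\Theta_{1}-\Theta_{2}|^{2}=|V_{1}-V_{2}|^{2}+|\Xi_{1}-\Xi_{2}|^{2}$ already contains the unweighted $|F_{1}-F_{2}|^{2}$ and $|V_{1}-V_{2}|^{2}$ contributions on the right-hand side of \eqref{DISTDEF}.

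The velocity piece is exact. For $\eta^{r}_{R}$, Taylor's theorem with integral remainder and the bound $\gamma I\le\nabla^{2}R\le\gamma' I$ in (H1) immediately give
\begin{equation*}
\tfrac{\gamma}{2}|\Xi_{1}-\Xi_{2}|^{2}\le\eta^{r}_{R}\le\tfrac{\gamma'}{2}|\Xi_{1}-\Xi_{2}|^{2}.
\end{equation*}
Taken together with the velocity term this already controls the entire $|\Theta_{1}-\Theta_{2}|^{2}$ factor appearing in $d$, from above and below.

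The main work is the $H$-piece, which must reproduce the $p$-growth weight $(1+|F_{1}|^{p-2}+|F_{2}|^{p-2})|F_{1}-F_{2}|^{2}$. Writing the second-order Taylor remainder in the symmetric double-integral form
\begin{equation*}
\eta^{r}_{H}=\int_{0}^{1}\!\!\int_{0}^{1}(1-\beta)\,(F_{1}-F_{2})^{T}\,\nabla^{2}H\bigl(F_{2}+\alpha(1-\beta)(F_{1}-F_{2})\bigr)(F_{1}-F_{2})\,d\alpha\,d\beta,
\end{equation*}
and using the lower bound $\kappa|F|^{p-2}|z|^{2}\le z^{T}\nabla^{2}H(F)z$ from (H1), I reduce the lower bound on $\eta^{r}_{H}$ to estimating
\begin{equation*}
\int_{0}^{1}\!\!\int_{0}^{1}(1-\beta)\bigl|F_{2}+\alpha(1-\beta)(F_{1}-F_{2})\bigr|^{p-2}d\alpha\,d\beta,
\end{equation*}
which is exactly the object bounded below by $c'(|F_{1}|^{p-2}+|F_{2}|^{p-2})$ via Lemma~\ref{AVESTLMM} with $\bar{\beta}=1$, $q=p-2\ge 4$, $u=F_{2}$, $v=F_{1}$. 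The upper bound is simpler: the upper estimate $\kappa'|F|^{p-2}|z|^{2}$ together with the convexity estimate $|F_{2}+\alpha(1-\beta)(F_{1}-F_{2})|^{p-2}\le C(|F_{1}|^{p-2}+|F_{2}|^{p-2})$ yields $\eta^{r}_{H}\le C\kappa'(|F_{1}|^{p-2}+|F_{2}|^{p-2})|F_{1}-F_{2}|^{2}$.

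Collecting the three bounds and choosing $\mu,\mu'>0$ appropriately in terms of $\kappa,\kappa',\gamma,\gamma'$ and the constant $c'$ of Lemma~\ref{AVESTLMM} yields \eqref{RENTEQUIVD}. The only nontrivial step is the lower bound on $\eta^{r}_{H}$, since naively one only gets a weight $|F_{2}+s(F_{1}-F_{2})|^{p-2}$ that can degenerate when $F_{1}$ and $F_{2}$ are nearly antipodal; the symmetric averaging lemma is precisely what restores control by both $|F_{1}|^{p-2}$ and $|F_{2}|^{p-2}$.
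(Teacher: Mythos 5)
Your proposal is correct and follows essentially the same route as the paper: a second-order Taylor representation of the relative entropy with integral remainder, the splitting into velocity, $R$- and $H$-contributions coming from (H1) (the paper does this at the level of $\nabla^2\eta$ rather than of $\eta^r$ itself, which is equivalent), and the lower bound on the averaged weight $\int_0^1\int_0^1 s\,|F_2+\tau s(F_1-F_2)|^{p-2}\,ds\,d\tau$ via Lemma~\ref{AVESTLMM} with $\bar{\beta}=1$, with the crude convexity bound $2^{p-3}(|F_1|^{p-2}+|F_2|^{p-2})$ for the upper estimate. No gaps.
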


\begin{proof}
Notice that
\begin{equation}\label{RENTEST1}
\begin{aligned}
\eta^r( \Theta_1,\Theta_2)&=\eta  (\Theta_1) -\eta({\Theta_2})-\nabla\eta({\Theta_2})(\Theta_1-{\Theta_2})\\
&=\int_{0}^{1}
  \int_{0}^{1} s(\Theta_1-{\Theta_2})^T \bigl(\nabla^2\eta(\hat{\Theta})\bigr)(\Theta_1-{\Theta_2})\,ds\, d\tau.\\
\end{aligned}
\end{equation}
where
$
\hat{\Theta}=({\hat{V},\hat{\Xi}})=(\hat{V},\hat{F},\hat{Z},\hat{w})
:={\Theta_2}+\tau s(\Theta_1-\Theta_2),$
$\tau,s \in [0,1].$
Observe next that
\begin{equation}\label{GRADG}
\nabla_{\Xi} G =
\begin{bmatrix}
\nabla_{F} H \quad {0} \quad  {0}
\end{bmatrix} + \nabla_{\Xi} R
\end{equation}
and therefore by \eqref{ENTDEF}
\begin{equation}\label{ETAHESS}
\begin{aligned}
&(\Theta_1\!-\!\Theta_2)^T \nabla^2\eta(\hat{\Theta})
(\Theta_1\!-\!\Theta_2)\\
&\!=\!|V_1\!-\!V_2|^2\! +\!(\Xi_1\!-\!\Xi_2)^T\nabla^2R(\hat{\Xi})(\Xi_1\!-\!\Xi_2)
\!+\!(F_1\!-\!F_2)^T\nabla^2H(\hat{F})(F_1\!-\!F_2).
\end{aligned}
\end{equation}
Then (H1), \eqref{RENTEST1} and \eqref{ETAHESS} imply
\begin{equation}\label{RENTEST2}
\begin{aligned}
\tfrac{1}{2}\BBA{V_1-V_2}^2+\tfrac{\gamma}{2}\BBA{\Xi_1-\Xi_2}^2
&+\kappa \BBA{F_1-F_2}^2 \int_{0}^{1}\int_{0}^{1}s|\hat{F}|^{p-2}ds\,d\tau\\
\leqslant\,\eta^r( & \Theta_1 ,\Theta_2)\; \leqslant \\
\tfrac{1}{2}\BBA{V_1-V_2}^2+\tfrac{\gamma'}{2}\BBA{\Xi_1-\Xi_2}^2
&+\kappa' \BBA{F_1-F_2}^2 \int_{0}^{1}\int_{0}^{1}s|\hat{F}|^{p-2}ds\,d\tau.\\
\end{aligned}
\end{equation}
We now consider the integral term in \eqref{RENTEST2}. Recall that
$\hat{F} = F_2 + \tau s (F_1 - F_2)$. Then, estimating from above,
we get
\begin{equation*}
\int_{0}^{1}\int_{0}^{1}s|\hat{F}|^{p-2}ds\,d\tau \leqslant
2^{p-3}\BBR{|F_1|^{p-2}+|{F_2}|^{p-2}}
\end{equation*}
while for the estimate from below we use Lemma $\ref{AVESTLMM}$
(with $s=1-\beta$ and $\bar{\beta}=1$) and obtain
\begin{equation*}
\int_{0}^{1}\int_{0}^{1}s|\hat{F}|^{p-2}ds\,d\tau \geqslant \,
c'\BBR{|F_1|^{p-2}+|{F_2}|^{p-2}}.
\end{equation*}
Combining \eqref{RENTEST2} with the two last inequalities we
obtain \eqref{RENTEQUIVD}.
\qed\end{proof}


Observe that the smoothness of $\Bar{\Theta}$ implies that 
there exists $M=M(T)>0$ such that
\begin{equation}\label{MBOUND}
\begin{aligned}
\quad M \, \geqslant  \, |\bar{\Theta}| + |\nabla_x\Bar{\Theta}| +
|\partial_t\Bar{\Theta}|, \quad  (x,t) \in \mathbb{T}^3 \times
[0,T].
\end{aligned}
\end{equation}


\begin{lemma}[$\mathcal{E}$-equivalence]\label{RENTINTEQUIVLMM}
The relative entropy $\eta^r$ and function $d$ satisfy \begin{equation*}
\eta^r(\Theta,\bar{\Theta}), \, d(\Theta,\bar{\Theta})\in L^{\infty}\BBR{[0,T];L^1}.
\end{equation*}
Moreover,
\begin{equation*}
\mu \, \mathcal{E}(t) \, \leqslant  \int_{\mathbb{T}^3} \,
\eta^r\bigl(\Theta(x,t),\bar{\Theta}(x,t)\bigr)\,dx \, \leqslant
\, \mu' \mathcal{E}(t), \quad  t\in[0,T]
\end{equation*}
where
\begin{equation*}
\mathcal{E}(t):=\int_{\mathbb{T}^3}
d\bigl(\Theta(x,t),\bar{\Theta}(x,t)\bigr) \,dx
\end{equation*}
and constants $\mu,\mu'>0$ are defined in Lemma \ref{RENTEQUIVLMM}.

\end{lemma}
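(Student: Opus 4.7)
The pointwise equivalence $\mu\,d(\Theta_1,\Theta_2)\leq \eta^r(\Theta_1,\Theta_2)\leq \mu'\,d(\Theta_1,\Theta_2)$ furnished by Lemma \ref{RENTEQUIVLMM} reduces the statement to verifying integrability: once we know $d(\Theta(\cdot,t),\bar\Theta(\cdot,t))\in L^1(\mathbb{T}^3)$ uniformly for $t\in[0,T]$, integrating the pointwise bound immediately yields the asserted equivalence between $\int_{\mathbb{T}^3}\eta^r\,dx$ and $\mathcal{E}(t)$ with the same constants $\mu,\mu'$. Consequently, the substantive step is to establish $d(\Theta,\bar\Theta)\in L^{\infty}([0,T];L^1)$; the corresponding statement for $\eta^r$ then follows again by Lemma \ref{RENTEQUIVLMM}.

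First I would extract uniform spatial bounds on the approximates. Combining (P5) with hypothesis (H2) gives $\sup_j \|F^j\|_{L^p}^p \leq C(E_0)$ and $\sup_j\bigl(\|v^j\|_{L^2}^2+\|Z^j\|_{L^2}^2+\|w^j\|_{L^2}^2\bigr)\leq C(E_0)$. Since by \eqref{CONTINTP} the interpolate $\Theta^{(h)}(t)$ is a convex combination of the two successive iterates $\Theta^{j-1}$ and $\Theta^j$ on each interval $I_j$, the triangle inequality transfers these discrete bounds to uniform-in-$t$ bounds on $V^{(h)}(\cdot,t)$ in $L^2$, on $F^{(h)}(\cdot,t)$ in $L^p$, and on $Z^{(h)}(\cdot,t), w^{(h)}(\cdot,t)$ in $L^2$. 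Together with the smoothness bound $|\bar\Theta|\leq M$ from \eqref{MBOUND}, this supplies uniform-in-$t$ control of $\|\Theta-\bar\Theta\|_{L^2}^2$ and of $\|F\|_{L^p}^p,\ \|\bar F\|_{L^p}^p$.

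Next I would estimate the weighted term pointwise. Writing $|F-\bar F|^2\leq 2|F|^2+2|\bar F|^2$ and applying Young's inequality with conjugate exponents $p/(p-2)$ and $p/2$ to the cross products $|F|^{p-2}|\bar F|^2$ and $|\bar F|^{p-2}|F|^2$ yields the pointwise bound
\[
\bigl(1+|F|^{p-2}+|\bar F|^{p-2}\bigr)|F-\bar F|^2 \,\leq\, C\bigl(1+|F|^p+|\bar F|^p\bigr).
\]
Integrating over $\mathbb{T}^3$ and invoking the uniform $L^p$-bounds from the previous paragraph shows this term is in $L^{\infty}([0,T];L^1)$. Combining with the $L^2$ control of $|\Theta-\bar\Theta|^2$ yields $d(\Theta,\bar\Theta)\in L^{\infty}([0,T];L^1)$, which completes the proof modulo the routine integration argument of the first paragraph.

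No step is genuinely delicate. The only point worth flagging as the main obstacle — and it is a conceptual one rather than a technical one — is that the uniform-in-$t$ $L^p$-bound on $F^{(h)}(\cdot,t)$, which drives the whole argument, is not obtained from any energy inequality for the time-continuous interpolates but rather from the discrete energy bound (P5) combined with (H2), propagated through the convex-combination structure of \eqref{CONTINTP}. Once this propagation is identified, the remainder is an accounting exercise in Young's inequality.
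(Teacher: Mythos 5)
Your proposal is correct and follows essentially the same route as the paper: the pointwise equivalence of Lemma \ref{RENTEQUIVLMM} reduces everything to the uniform $L^1$ bound on $d(\Theta,\bar\Theta)$, which both you and the paper obtain from the convex-combination structure of the interpolates \eqref{CONTINTP}, the growth condition (H2), the energy bound (P5), and the boundedness \eqref{MBOUND} of the smooth solution. The paper merely compresses your two-step accounting (discrete bounds first, then Young's inequality on the weighted term) into a single pointwise estimate $d(\Theta,\bar\Theta)\leqslant C(M)\bigl(1+G(\Xi^{j-1})+G(\Xi^j)+|v^{j-1}|^2+|v^j|^2\bigr)$ before integrating.
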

\begin{proof}
Fix $t\in[0,T]$. Then there exists $j\geqslant 1$ such that $t\in I_j$. Hence \eqref{CONTINTP},
\eqref{DISTDEF}, \eqref{MBOUND} and (H2) imply for $p\in{[6,\infty)}$
\begin{equation}\label{DISTEST1}
\begin{aligned}
d(\Theta(\cdot,t),\bar{\Theta}(\cdot,t))
&\leqslant C \Bigl( 1+|F|^p + |Z|^2+|w|^2+|V|^2\Bigr)\\
&\leqslant C \Bigl( 1+G(\Xi^{j-1}) + G(\Xi^{j})+|v^{j-1}|^2 +
|v^j|^2\Bigr)
\end{aligned}
\end{equation}
with $C=C(M)>0$ independent of $h$, $j$ and $t$. Hence
\eqref{ITERBOUND} and \eqref{DISTEST1} imply
\begin{equation}\label{DISTEST2}
\int_{\mathbb{T}^3} \, d(\Theta(\cdot,t),\bar{\Theta}(\cdot,t))\,
dx \, \leqslant C'(1+E_0), \quad \forall t\in [0,T]
\end{equation}
for some $C'=C'(M)>0$. Then \eqref{RENTEQUIVD} and
\eqref{DISTEST2} imply the lemma.
\qed\end{proof}


\subsection{Estimate for {the} term $Q$ on $t\in[0,T]$}
\begin{lemma}[$Q$-bound]\label{QTERMESTLMM} There exists $ \lambda=\lambda(M)
> 0$ such that
\begin{equation}\label{QTERMEST}
\BBA{Q(x,t)} \leqslant \lambda \,
d(\Theta,\bar{\Theta}), \quad (x,t) \in \mathbb{T}^3 \times [0,T]
\end{equation}
where the term $Q$ is defined by \eqref{QTERM}.
\end{lemma}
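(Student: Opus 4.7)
The plan is to bound $|Q|$ pointwise by splitting $Q = T_1 + T_2 + T_3$ according to the three lines of \eqref{QTERM} and estimating each piece by $C\, d(\Theta,\bar{\Theta})$. The prefactors $\partial_{\alpha}\bar{V}_i$ and $\partial_{\alpha}(G_{,A}(\bar{\Xi})) = G_{,AB}(\bar{\Xi})\,\partial_{\alpha}\bar{\Xi}_B$ are bounded by a constant $C_M$ depending only on $M$ via \eqref{MBOUND} and the smoothness of $G$ (continuous second derivatives on the compact set $\{|\bar{\Xi}|\leqslant M\}$). Thus the task reduces to bounding the remaining differences by $d$.

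First I would exploit the structure $\Phi(F)=(F,\cof F,\det F)$. For $A$ indexing the $F$-block, $\Phi^A_{,i\alpha}$ is constant, so $\Phi^A_{,i\alpha}(F)-\Phi^A_{,i\alpha}(\bar{F})\equiv 0$ and the $H$-contribution in $T_2$ and $T_3$ automatically disappears from the relevant cross terms. For the $Z$-block ($\cof F$), $\Phi^A_{,i\alpha}$ is linear in $F$, giving $|\Phi^A_{,i\alpha}(F)-\Phi^A_{,i\alpha}(\bar{F})|\leqslant C|F-\bar{F}|$; for the $w$-block ($\det F$), $\Phi^A_{,i\alpha}$ is quadratic in $F$, giving $|\Phi^A_{,i\alpha}(F)-\Phi^A_{,i\alpha}(\bar{F})|\leqslant C(|F|+|\bar{F}|)|F-\bar{F}|$. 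Combining,
\begin{equation*}
|\Phi^A_{,i\alpha}(F)-\Phi^A_{,i\alpha}(\bar{F})|\leqslant C\bigl(1+|F|+|\bar{F}|\bigr)|F-\bar{F}|.
\end{equation*}
For $T_1$, pairing this with $|V-\bar{V}|$ by Young's inequality produces $(1+|F|+|\bar{F}|)^{2}|F-\bar{F}|^2+|V-\bar{V}|^2$, and since $p\geqslant 6$ one has $|F|^2\leqslant 1+|F|^{p-2}$, so $T_1\leqslant C_M\,d(\Theta,\bar{\Theta})$.

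For $T_2$ and $T_3$ I would use the decomposition $G=H+R$ from (H1). The bound $\gamma I\leqslant\nabla^2 R\leqslant \gamma' I$ gives $|R_{,A}(\Xi)-R_{,A}(\bar{\Xi})|\leqslant \gamma'|\Xi-\bar{\Xi}|$, while integrating $\nabla^2 H$ yields $|H_{,i\alpha}(F)-H_{,i\alpha}(\bar{F})|\leqslant C\bigl(|F|^{p-2}+|\bar{F}|^{p-2}\bigr)|F-\bar{F}|$. As noted, the $H$-contribution only pairs with the $Z$- and $w$-block differences of $\Phi$, which vanish to the required order, so the dangerous $|F|^{p-2}$-weighted terms in $T_2$ come only from the $R$-part. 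A Young split then gives $T_2\leqslant C_M\,d(\Theta,\bar{\Theta})$, where the key algebraic step is $(1+|F|+|\bar{F}|)^{2}\leqslant C(1+|F|^{p-2}+|\bar{F}|^{p-2})$ using $p\geqslant 4$. For $T_3$, I would apply Taylor's theorem with integral remainder,
\begin{equation*}
G_{,A}(\Xi)-G_{,A}(\bar{\Xi})-G_{,AB}(\bar{\Xi})(\Xi-\bar{\Xi})_B=\int_{0}^{1}(1-\tau)\,G_{,ABC}(\bar{\Xi}+\tau(\Xi-\bar{\Xi}))\,d\tau\,(\Xi-\bar{\Xi})_B(\Xi-\bar{\Xi})_C,
\end{equation*}
and invoke (H5): the $R$-piece of the remainder is $\leqslant C|\Xi-\bar{\Xi}|^2$, and the $H$-piece is $\leqslant C\bigl(|F|^{p-3}+|\bar{F}|^{p-3}\bigr)|F-\bar{F}|^2$. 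Since $\Phi^A_{,i\alpha}(\bar{F})\leqslant C(1+|\bar{F}|^2)\leqslant C_M$, multiplying through and absorbing $|F|^{p-3}\leqslant 1+|F|^{p-2}$ yields $T_3\leqslant C_M\,d(\Theta,\bar{\Theta})$.

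The main obstacle is the bookkeeping in Term $T_2$ and $T_3$: one must be careful that when the $(|F|+|\bar{F}|)$-factor from the $w$-block derivative of $\Phi$ meets another $|F|$-weighted factor, the product does not exceed the growth $1+|F|^{p-2}+|\bar{F}|^{p-2}$ allowed by $d$. This is where the assumption $p\geqslant 6$ (so $p-2\geqslant 2\cdot 2$) enters decisively, ensuring $(1+|F|+|\bar{F}|)^2$ can always be absorbed into $(1+|F|^{p-2}+|\bar{F}|^{p-2})$ by Young's inequality. Once each $T_k\leqslant C_M\,d(\Theta,\bar{\Theta})$ is established, setting $\lambda=3C_M$ gives \eqref{QTERMEST}.
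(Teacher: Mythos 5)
Your proposal is correct and follows essentially the same route as the paper's proof: the same three-way splitting of $Q$, the same constant/linear/quadratic structure of $\Phi^A_{,i\alpha}$ (the paper's \eqref{PHIDIFFPROP}--\eqref{PHIDIFFPROP2}), the same cancellation of the $H$-part in the middle term via $G=H+R$ (the paper's \eqref{GPHIDIFF}), and a Taylor expansion with integral remainder plus (H5) for the quadratic-remainder term, with $p\geqslant 6$ used to absorb the weights into $1+|F|^{p-2}+|\bar F|^{p-2}$. One wording slip only: the $H$-contribution disappears from $T_2$ because it pairs with the $F$-block differences of $\Phi^A_{,i\alpha}$, which vanish identically (as you correctly state at the outset), not because the $Z$- and $w$-block differences ``vanish to the required order''; this does not affect the validity of the argument.
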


\begin{proof} Let $C=C(M)>0$ be a generic constant. Notice
that 
for all $F_1, F_2 \in \mdd{3}$
\begin{equation}\label{PHIDIFFPROP}
\bigl|\Phi_{,i\alpha}^A(F_1)-\Phi_{,i\alpha}^A(F_2)\bigr|\,\leqslant\,\left\{
\begin{aligned}
&\,0,& &A=1,\dots,9\\
&|F_1-F_2|,& & A=10,\dots,18\\
&3\bigl(|F_1|+|F_2|\bigr)|F_1-F_2|, & & A=19
\end{aligned}
\right.
\end{equation}
and hence
\begin{equation} \label{PHIDIFFPROP2}
|\Phi_{,i\alpha}^A(F)-\Phi_{,i\alpha}^A(\bar{F})|\,\leqslant\,C\BBR{1+|F|}\BBA{F-\bar{F}},\quad
A=1,\dots19.
\end{equation}
Then, using \eqref{MBOUND} and \eqref{PHIDIFFPROP2} we estimate
the first term of $Q$:
\begin{equation}\label{QTEST1}
\bigl|\partial_{\alpha}  (G_{,A}(\Bar{\Xi}))
(\Phi_{,i\alpha}^A(F)-\Phi_{,i\alpha}^A(\Bar{F}))(V_i-\bar{V}_i)\bigr|
\leqslant C \big(
(1+|F|^2)|F-\bar{F}|^2+|V-\bar{V}|^2 \big).
\end{equation}

Observe now that \eqref{GRADG} and \eqref{PHIDIFFPROP}$_1$ imply
for all $\Xi_1,\Xi_2\in\RR^{22}$, $F_3,F_4 \in \RR^{9}$
\begin{equation}\label{GPHIDIFF}
\begin{aligned}
&(G_{,A}(\Xi_1) - G_{,A}(\Xi_2))(\Phi_{,i\alpha}^A(F_3) -
\Phi_{,i\alpha}^A(F_4))\\
&=(R_{,A}(\Xi_1)-R_{,A}(\Xi_2))(\Phi_{,i\alpha}^A(F_3)-\Phi_{,i\alpha}^A(F_4)).
\end{aligned}
\end{equation}
Thus, by (H1), \eqref{PHIDIFFPROP2} and \eqref{GPHIDIFF} we obtain
the estimate for the second term:
\begin{equation}\label{QTEST2}
\bigl|\partial_{\alpha} \Bar{V_i}
(G_{,A}(\Xi)\!-\!G_{,A}(\bar{\Xi}))(\Phi_{,i\alpha}^A(F)\!-\!\Phi_{,i\alpha}^A(\bar{F}))\bigr|
\!\leqslant\! C \big(|\Xi\!-\!\bar{\Xi}|^2\!+\!(1\!+\!|F|^2)|F\!-\!\bar{F}|^2\big).
\end{equation}


Finally, we define for each $A=1,\dots,19$
\begin{equation}\label{JADEF}
\begin{aligned}
J_A&:=G_{,A}(\Xi)-G_{,A}(\bar{\Xi})-G_{,AB}(\bar{\Xi})\BBR{\Xi-\bar{\Xi}}_B\\
&\phantom{:}=\int_{0}^{1}\int_{0}^{1}s(\Xi-\bar{\Xi})^T \nabla^2 G_{,A}
(\hat{\Xi})(\Xi-\bar{\Xi}) \, ds \,d\tau
\end{aligned}
\end{equation}
where
$
\hat{\Xi} = (\hat{F},\hat{Z},\hat{w}):= \bar{\Xi}+\tau s (\Xi
-\bar{\Xi}),$
$\tau,s\in[0,1].$
By \eqref{GDECOMP} and (H5) we have for each $A=1,\dots,19$
\begin{equation}\label{GAHESSPROP}
\begin{aligned}
\bigl|(\Xi-\bar{\Xi})^T \nabla^2 G_{,A}(\hat{\Xi})(\Xi-\bar{\Xi})\bigr| \leqslant C
\big(|F-\Bar{F}|^2|\hat{F}|^{p-3} + |\Xi - \Bar{\Xi}|^2 \big).
\end{aligned}
\end{equation}
Then by \eqref{MBOUND} and \eqref{JADEF}, \eqref{GAHESSPROP} we
obtain the estimate for the third term:
\begin{equation}\label{QTEST3}
\begin{aligned}
|\partial_{\alpha}\bar{V}_i \, \Phi_{,i\alpha}^A(\bar{F}) \,
J_A|
&\leqslant C\Bigl(|\Xi\!-\!\bar{\Xi}|^2\!+\!|F\!-\!\bar{F}|^2 \!\!\int_{0}^{1}\!\!\!\int_{0}^{1}\!\!
|\bar{F}\! + \!\tau s(F \!-\!\bar{F})|^{p-3}ds\,d\tau \Bigr)\\
&\leqslant C\Bigl(|\Xi\!-\!\bar{\Xi}|^2\!+\!|F\!-\!\bar{F}|^2(1\!+\!|F|^{p-3})\Bigr).
\end{aligned}
\end{equation}


Thus by \eqref{DISTDEF}, \eqref{QTEST1}, \eqref{QTEST2} and \eqref{QTEST3} we conclude
for $p \in {[6,\infty)}$
\begin{equation*}
\BBA{Q(x,t)} \,\leqslant\,C
\Bigl(|\Theta-\bar{\Theta}|^2+(1+|F|^{p-2})|F-\bar{F}|^2\Bigr)
\leqslant C\,d(\Theta,\Bar{\Theta}). \tag*{\QED}
\end{equation*}
\end{proof}

\subsection{Estimates for the terms $D^j$ and $S$ on $t \in I_j'\subset [0,T]$}
In this section, we consider $j \geqslant 1$ such that $(j-1)h <
T$ and estimate the dissipative and error terms for $t\in I'_j$
where
\begin{equation*}
I_j'\,:= I_j\bigcap \;[0,T] = [(j-1)h,jh)\bigcap \;[0,T].
\end{equation*}

\begin{lemma}[$D^j$-bound]\label{DTERMESTLMM} Let $D^j$ be the term defined
by \eqref{DTERM}. Then
\begin{equation}\label{DINTEGRB}
D^j \in L^{\infty}\bigl(I_j'\,; L^1(\mathbb{T}^3)\bigr)
\end{equation}
and 
there exists constant $C_D>0$ independent of $h$ and $j$  such that 
 for all times $\tau\in \bar{I}_j':=[(j-1)h,jh]\bigcap\,[0,T]$
\begin{equation} \label{DINTXTjEST}
\int_{(j\!-\!1)h}^{\tau} \! \int_{\mathbb{T}^3} \!\!
\biggl(\frac{1}{h}D^j \!\biggr) \, dx \, dt
\!\geqslant \!  a(\!\tau\!)     C_D \!\!\int_{\mathbb{T}^3}\!\!
|{\delta\Theta}^j|^2 \!+\!\bigl(|F^{j-1}|^{p-2}\!+\!|F^j|^{p-2} \bigr)
|\delta F^j|^2 \,dx \! \geqslant \! 0
\end{equation}
with
\begin{equation}\label{ADEF}
\begin{aligned}
\quad a(\tau):=\frac{\tau-h(j-1)}{h}\in [0,1], \quad \tau \in
\bar{I}_j'.
\end{aligned}
\end{equation}
\end{lemma}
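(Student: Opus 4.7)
The approach is to first write $D^j$ as a quadratic form in $\delta\Theta^j$ via a Taylor expansion of $\nabla\eta$, then use the convexity bounds (H1) to obtain a pointwise lower bound, and finally reduce the delicate $|\hat F|^{p-2}$-weighted piece to Lemma~\ref{AVESTLMM}. The statement then falls out after integrating in time and space.

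On $I_j$ the formulas \eqref{CONTINTP}, \eqref{CONSTINTP} give $\theta(t)=\Theta^j$ and $\Theta(t)=\Theta^{j-1}+a(t)\delta\Theta^j$, hence $\theta-\Theta=(1-a(t))\delta\Theta^j$. Since $\eta\in C^2$ by (H1), the fundamental theorem of calculus yields
\[
 D^j = (1-a(t))\int_0^1 (\delta\Theta^j)^{T}\nabla^2\eta(\hat{\Theta})\,\delta\Theta^j\,ds,
\]
with $\hat{\Theta}=\Theta^{j-1}+[a(t)+s(1-a(t))]\delta\Theta^j$. Integrability \eqref{DINTEGRB} follows by splitting $D^j$ componentwise: the $V$-part and the $R$-contributions in $F$, $Z$, $w$ are controlled by $C|\delta\Theta^j|^2$ via the bounded Hessian of $R$ from (H1), while the $H$-contribution uses the growth bound (H4) to pair $\nabla_F H(\xi)-\nabla_F H(\Xi)\in L^{p'}$ with $\delta F^j\in L^p$ (both uniform in $j$ via (H2) and (P5)) through H\"older's inequality.

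For the quantitative bound, change variables $\beta=a(t)$, $dt=h\,d\beta$:
\[
 \int_{(j-1)h}^{\tau}\!\!\int_{\mathbb{T}^3}\frac{1}{h}D^j\,dx\,dt \;=\; \int_{\mathbb{T}^3}\!\int_0^{\bar\beta}D^j\,d\beta\,dx, \qquad \bar\beta=a(\tau)\in[0,1].
\]
By the structure of $\eta$ in \eqref{ENTDEF} and (H1) the Hessian is bounded below pointwise by
\[
 (\delta\Theta^j)^{T}\nabla^2\eta(\hat{\Theta})\delta\Theta^j \;\geq\; |\delta v^j|^2+\gamma|\delta\Xi^j|^2+\kappa|\hat F|^{p-2}|\delta F^j|^2.
\]
The contribution of the first two terms is immediate: since $\bar\beta\leq 1$, the integral $\int_0^{\bar\beta}(1-\beta)\,d\beta=\bar\beta(1-\bar\beta/2)\geq\bar\beta/2$ produces a linear-in-$\bar\beta$ lower bound of the form $\tfrac12\,\bar\beta\min(1,\gamma)|\delta\Theta^j|^2$.

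The main obstacle is the $|\hat F|^{p-2}$ weighted term, whose integrand $\hat F=F^{j-1}+[\beta+s(1-\beta)]\delta F^j$ does not match the template $u+\alpha(1-\beta)(v-u)$ of Lemma~\ref{AVESTLMM}. The key identity that unlocks the estimate is
\[
 \beta+s(1-\beta) \;=\; 1-(1-s)(1-\beta),
\]
which rewrites $\hat F=F^j+(1-s)(1-\beta)(F^{j-1}-F^j)$; the substitution $s'=1-s$ then places the integrand exactly in the form of Lemma~\ref{AVESTLMM} with $u=F^j$, $v=F^{j-1}$, $q=p-2\geq 4$, yielding
\[
 \int_0^{\bar\beta}\!\!\int_0^1 (1-\beta)|\hat F|^{p-2}\,ds'\,d\beta \;\geq\; c'\bar\beta\bigl(|F^{j-1}|^{p-2}+|F^j|^{p-2}\bigr).
\]
Combining this with the previous bound, setting $C_D:=\min(\tfrac12,\tfrac{\gamma}{2},\kappa c')$ and integrating over $\mathbb{T}^3$ gives \eqref{DINTXTjEST}; the non-negativity in \eqref{DINTXTjEST} is manifest from the right-hand side being a sum of squares weighted by non-negative quantities.
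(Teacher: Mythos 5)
Your proof is correct and follows essentially the same route as the paper: the Taylor/FTC representation of $D^j$ as a $(1-a(t))$-weighted quadratic form, the Hessian bounds from (H1), the change of variables $\beta=a(t)$, and the rewriting $\hat F=F^j+(1-s)(1-\beta)(F^{j-1}-F^j)$ so that Lemma~\ref{AVESTLMM} applies with $u=F^j$, $v=F^{j-1}$, $q=p-2$, yielding the same constant structure $C_D=\min(\cdot)$. The only harmless divergence is in establishing \eqref{DINTEGRB}: the paper bounds the weighted term pointwise by $C\bigl(1+|F^{j-1}|^p+|F^j|^p+\dots\bigr)$ and invokes (H2) and (P5), whereas you use (H4) together with an $L^{p'}$--$L^p$ H\"older pairing; both arguments are valid.
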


\begin{proof}
By (H1), \eqref{ENTDEF} and the definition of $D^j$ we have for
$t\in I'_j$
\begin{equation} \label{DDISSP}
\begin{aligned}
D^j =\BBR{v-V} {\delta v^j} +\bigl(\nabla H(f)-\nabla
H(F)\bigr){\delta F^j}
+\bigl(\nabla R(\xi)-\nabla R(\Xi)\bigl) {\delta\Xi^j}.\\
\end{aligned}
\end{equation}
Consider each of the three terms in \eqref{DDISSP}. Notice that,
by \eqref{CONTINTP}, \eqref{CONSTINTP}, we have
\begin{equation}\label{DIFFAPPROX1}
\begin{aligned}
v(\cdot,t)-V(\cdot,t)&=\BBR{1-a(t)}\delta v^j \\
\xi(\cdot,t)-\Xi(\cdot,t)&=\BBR{1-a(t)}\delta \Xi^j.
\end{aligned}
\end{equation}
Using \eqref{DIFFAPPROX1} we compute
\begin{equation}\label{VRHDISSP}
\begin{aligned}
\bigl( v-V \bigr) {\delta v^j} &= \BBR{1-a(t)}|\delta
v^j|^2 \\
\bigl(\nabla R (\xi)-\nabla R (\Xi)\bigr) {\delta\Xi^j} &=
\BBR{1-a(t)}\int_{0}^{1} (\delta\Xi^j)^T \nabla^2R
(\Hat{\Xi})\,(\delta\Xi^j)
\,ds\\
\bigl(\nabla H (f)-\nabla H(F)\bigr) \delta F^j
&=\BBR{1-a(t)}\int_{0}^{1} (\delta
F^j)^{T}\nabla^2H(\hat{F})\,(\delta F^j) \,ds
\end{aligned}
\end{equation}
where
$\Hat{\Xi}
=(\hat{F},\hat{Z},\hat{w}):=s{\xi}(\cdot,t)+(1-s)\Xi(\cdot,t), $
$s\in[0,1].$
Then (H1), \eqref{DDISSP} and~\eqref{VRHDISSP} together with the
fact that $\BBR{1-a(t)}\in[0,1]$ imply
\begin{equation}\label{DABSEST1T}
\begin{aligned}
\BBA{D^j(\cdot,t)} \leqslant \biggl( |\delta v^j|^2 +
\gamma'|\delta \Xi^j|^2 +\kappa'|\delta
F^j|^2\int_{0}^{1}|\Hat{F}(s,t)|^{p-2}ds\biggr).
\end{aligned}
\end{equation}
Consider now the two latter terms in \eqref{DABSEST1T}. Recalling
that $\Hat{F} = sf - (1-s)F$ and using (H2) together with
\eqref{CONTINTP}, \eqref{CONSTINTP} we obtain
\begin{equation*}
\begin{aligned}
&\gamma'|\delta \Xi^j|^2  + \kappa'|\delta F^j|^2\int_{0}^{1}|\Hat{F}(s,t)|^{p-2}ds \\
&\,\leqslant C \BBR{ 1 + |F^{j-1}|^p+|F^j|^p + |Z^{j-1}|^2+|Z^j|^2 + |w^{j-1}|^2+|w^j| }\\
\end{aligned}
\end{equation*}
for some $C>0$ independent of $h$, $j$ and $t$. Thus, combining
the last inequality with (H2), the growth estimate
\eqref{ITERBOUND} and \eqref{DABSEST1T}, we conclude 
\begin{equation*}
\begin{aligned}
\int_{\mathbb{T}^3}\,\BBA{D^j(x,t)}\,dx \, \leqslant \, \nu'
\bigl( 1+E_0 \bigr), \quad \forall t\in I_j'
\end{aligned}
\end{equation*}
for some $\nu'>0$ independent of $h$, $j$ and $t$. This proves
\eqref{DINTEGRB}.


Let us now estimate $D^j$ from below. By \eqref{DDISSP},
\eqref{VRHDISSP} and (H1) we obtain
\begin{equation}\label{DEST}
\begin{aligned}
D^j(\cdot,t)
& \, \geqslant \,\nu\BBR{1-a(t)} \Bigl(|\delta \Theta^j|^2+|\delta F^j|^2\int_{0}^{1}|\hat{F}(s,t)|^{p-2}ds \Bigr) \geqslant \,0\\
\end{aligned}
\end{equation}
for $\nu=\min(1,\gamma,\kappa)>0$. Notice that
$$
\hat{F}(s,t) =sf(t)+(1-s)F(t) = F^j+(1-s)(1-a(t))(F^{j-1}-F^j).
$$
Then, by making use of Lemma $\ref{AVESTLMM}$ we obtain for $\tau
\in \bar{I}'_j$
\begin{equation*}
\begin{aligned}
&\int_{(j-1)h}^{\tau} \Big(\BBR{1-a(t)} |\delta
F^j|^2\int_{0}^{1}|\Hat{F}(s,t)|^{p-2}ds \Big) \, dt\\
&=h
|\delta F^j|^2 \int_{0}^{a(\tau)}\int_{0}^{1}(1-\beta)
|F^j+\alpha(1-\beta)(F^{j-1}-F^j)|^{p-2} d\alpha\,d\beta \\
&\geqslant h \, a(\tau)\,c' \big(|F^{j-1}|^{p-2}+|F^{j}|^{p-2}\big) |\delta F^j|^2
\end{aligned}
\end{equation*}
where we used the change of variables $\alpha = 1-s$ and
$\beta=a(t)$. Similarly, we get
\begin{equation*}
\begin{aligned}
\int_{(j-1)h}^{\tau} \BBR{1-a(t)}|{\delta\Theta}^j|^2 \, dt \, =
\, h \,|{\delta\Theta}^j|^2 \int_{0}^{a(\tau)}(1-\beta) \, d\beta
\, \geqslant \, \frac{h \, a(\tau)}{2} |{\delta\Theta}^j|^2 .
\end{aligned}
\end{equation*}
Then \eqref{DEST} and the last two estimates imply
\eqref{DINTXTjEST} for $C_D=\min(\nu c',\frac{\nu}{2})
> 0$.
\qed\end{proof}


\begin{lemma}[$S$-bound]\label{STERMESTLMM} Let $S$ be the term defined
by \eqref{STERM}. Then
\begin{equation}\label{SINTEGRB}
S \in L^{\infty}\bigl(I_j'\,;L^1(\mathbb{T}^3)\bigr)
\end{equation}
and there exists constant s$C_S>0$ independent of $h$, $j$ such that for any
$\eps>0$ and all $\tau \in \bar{I}'_j$
\begin{equation}\label{SINTEST}
\begin{aligned}
&\int_{(j-1)h}^{\tau}  \int_{\mathbb{T}^3} \BBA{S(x,t)} \,dx\,dt\,\\
& \leqslant  C_S\biggl[ \, a(\tau)(h+\varepsilon)
\int_{\mathbb{T}^3} |{\delta\Theta}^j|^2
+(|F^{j-1}|^{p-2}+|F^{j}|^{p-2})|\delta F^j|^2 \, dx
\\
& \quad + \frac{a(\tau)h^2}{\eps}(3+2E_0)+\int_{(j-1)h}^{\tau}\int_{\mathbb{T}^3}
\,d(\Theta,\bar{\Theta}) \,dx \, dt \, \biggr]
\end{aligned}
\end{equation}
with $a(\tau)$ defined by \eqref{ADEF}.
\end{lemma}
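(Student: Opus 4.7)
The plan is to bound each of the eight subterms of $S$ pointwise in $(x,t)$ and then integrate over $\mathbb{T}^3 \times ((j-1)h,\tau)$. The core inputs are the identities
\begin{equation*}
v-V = (1-a(t))\delta v^j,\quad \xi-\Xi = (1-a(t))\delta \Xi^j,\quad \tilde f-F = -a(t)\,\delta F^j,
\end{equation*}
which follow directly from \eqref{CONTINTP}--\eqref{CONSTINTP}; the smoothness bound \eqref{MBOUND} on $\bar\Theta$; the difference estimates \eqref{PHIDIFFPROP}--\eqref{PHIDIFFPROP2}, combined with the key observation that $\Phi^A_{,i\alpha}$ is constant for $A=1,\dots,9$, so any factor $\Phi^A_{,i\alpha}(F_1)-\Phi^A_{,i\alpha}(F_2)$ implicitly restricts the $A$-sum to $A\in\{10,\dots,19\}$; and the decomposition $G=H+R$ in (H1), which via the mean value theorem yields
\begin{equation*}
|G_{,A}(\xi)-G_{,A}(\Xi)| \leqslant C(1-a)\bigl[|\delta\Xi^j| + (|F^{j-1}|^{p-2}+|F^j|^{p-2})|\delta F^j|\bigr],
\end{equation*}
with the second bracketed summand relevant only for $A\in\{1,\dots,9\}$ where $H$ contributes, and analogously for $G_{,A}(\Xi)-G_{,A}(\bar\Xi)$. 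A consequence of the $A$-cancellation is that the heavy $|F|^{p-2}$ weight from $H$ can appear in \eqref{STERM} only through the subterm $(G_{,A}(\xi)-G_{,A}(\Xi))\Phi^A_{,i\alpha}(\bar F)$, where $\Phi^A$ is not differenced.

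For each subterm I apply pointwise Young's inequality $ab\leqslant \tfrac{\lambda}{2}a^2+\tfrac{1}{2\lambda}b^2$ with $\lambda$ tuned to the required scaling. Subterms linear in an iterate increment against a smooth, pointwise-bounded factor --- namely the $\Phi^A(\bar F)(v-V)$ piece and the $R$-part of $(G_{,A}(\xi)-G_{,A}(\Xi))\Phi^A(\bar F)$ --- use $\lambda = h/\varepsilon$; combined with $\int_{(j-1)h}^\tau (1-a)^2\,dt\leqslant ha(\tau)$, they contribute a dissipative piece of size $\tfrac{\varepsilon a(\tau)}{2}\int|\delta\Theta^j|^2\,dx$ and a constant piece of size $Ca(\tau)h^2/\varepsilon$. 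The $H$-contribution in that same subterm is treated analogously by Young-splitting $(1-a)|F|^{(p-2)/2}\cdot|F|^{(p-2)/2}|\delta F^j|$; the resulting constant piece $\tfrac{a(\tau)h^2}{\varepsilon}\int(|F^{j-1}|^{p-2}+|F^j|^{p-2})\,dx$ is controlled by $C(1+E_0)$ through (H2), \eqref{ITERBOUND} and H\"older. Subterms quadratic in iterate increments (e.g.~$(\Phi^A(\tilde f)-\Phi^A(F))(v-V)\sim a(1-a)(1+|F^{j-1}|+|F^j|)|\delta F^j||\delta v^j|$) use $\lambda=1$, the envelope $(1+|F^{j-1}|+|F^j|)^2\leqslant C(1+|F^{j-1}|^{p-2}+|F^j|^{p-2})$ valid for $p\geqslant 4$, and $\int a(1-a)\,dt\leqslant ha(\tau)$, producing $Cha(\tau)\int[(|F^{j-1}|^{p-2}+|F^j|^{p-2})|\delta F^j|^2+|\delta\Theta^j|^2]\,dx$. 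Subterms mixing an iterate increment with a smooth difference $V-\bar V$ or $F-\bar F$ are split so that the smooth side is absorbed into $\int\int d(\Theta,\bar\Theta)\,dx\,dt$ via $(1+|F|^2)|F-\bar F|^2\leqslant C\,d(\Theta,\bar\Theta)$ (again using $p\geqslant 4$).

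The $L^\infty(L^1)$ integrability \eqref{SINTEGRB} follows by the same pointwise bounds applied without the Young split, combined with \eqref{ITERBOUND}, (H2) and (H4) to obtain uniform-in-$t\in I_j'$ control. The main obstacle is not conceptual but the bookkeeping: checking the eight subterms against one another, verifying that every polynomial weight arising from $\Phi$-derivative or $G$-derivative differences fits inside the $(1+|F|^{p-2})$ envelope (hence the standing assumption $p\geqslant 6\geqslant 4$), and pinning down the specific numerical constant $3+2E_0$ from the bound on $\int(1+|F^{j-1}|^p+|F^j|^p)\,dx$ provided by (H2) and \eqref{ITERBOUND}.
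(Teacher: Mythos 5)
Your proposal is correct and follows essentially the same route as the paper: the same difference identities $v-V=(1-a)\delta v^j$, $\xi-\Xi=(1-a)\delta\Xi^j$, $F-\tilde f=a\,\delta F^j$, the same null-Lagrangian cancellation \eqref{GPHIDIFF} isolating the $|F|^{p-2}$-weighted contribution in the term $(G_{,A}(\xi)-G_{,A}(\Xi))\Phi^A_{,i\alpha}(\bar F)$, and the same Young splitting with parameter $h/\varepsilon$ on the linear-in-increment pieces followed by time integration and (H2) with \eqref{ITERBOUND} for the $a(\tau)h^2/\varepsilon$ constant. The only differences are cosmetic (you keep the factor $(1-a)$ inside the Young step and group the eight subterms by type rather than as in \eqref{SESTTM1}--\eqref{SESTTM3}), so the argument matches the paper's proof.
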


\begin{proof}
As before, we let $C=C(M)>0$ be a generic constant and remind the
reader that all estimates are done for $t\in I'_j$.


Observe that \eqref{CONTINTP}$_2$, \eqref{CONSTINTP}$_3$ and
\eqref{ADEF} imply
$
F(\cdot,t)-\tilde{f}(\cdot,t)=a(t) \hspace{1pt} \delta F^j.
$
Hence by \eqref{CONTINTP}$_2$, \eqref{CONSTINTP}$_3$,
\eqref{PHIDIFFPROP}, \eqref{ADEF} and the identity above we get
the estimate
\begin{equation}\label{PHIDIFFPROP3}
\bigl|\Phi_{,i\alpha}^{A}(\tilde{f})- \Phi_{,i\alpha}^{A}(F)
\bigr|  \leqslant C \bigl(1+|\tilde{f}|+|F| \bigr) |F-\tilde{f}|
\leqslant C \BBR{1+|F^{j-1}|+|F^{j}|} |\delta F^j|.
\end{equation}
Thus \eqref{PHIDIFFPROP2}, \eqref{ADEF}, \eqref{DIFFAPPROX1}$_1$,
\eqref{PHIDIFFPROP3} and  Young's inequality imply
\begin{equation}\label{SESTTM1}
\begin{aligned}
&\bigl| \Phi_{,i\alpha}^A(\bar{F})(v_i - V_i)\bigr|
 + \bigl|(\Phi_{,i\alpha}^{A} (F)-\Phi_{,i\alpha}^A(\bar{F}))(v_i-V_i)\bigr|\\
& +\bigl|(\Phi_{,i\alpha}^{A} (\tilde{f}) - \Phi_{,i\alpha}^{A}(F))(v_i-V_i) \bigr|
+\bigl|(\Phi_{,i\alpha}^{A} (\tilde{f}) - \Phi_{,i\alpha}^A(F))(V_i-\bar{V}_i)\bigr| \\
&\leqslant  C \Bigl( |\delta v^j|+
(1+|F|^2)|F-\bar{F}|^2 + |\delta v^j|^2
+ (1+|F^{j-1}|^2+|F^{j}|^2)
|\delta F^j|^2 \\
& \quad + |V-\bar{V}|^2  \Bigl).
\end{aligned}
\end{equation}
We also notice that for all $F_1, F_2 \in \mdd{3}$
\begin{equation*}
\begin{aligned}
H_{,i\alpha}(F_1)-H_{,i\alpha}(F_2)&=
\int_{0}^{1}\frac{\partial^{2}H}{\partial F_{i\alpha}\partial F_{lm}}\bigl(sF_1+(1-s)F_2\bigr)(F_{1}-F_{2})_{lm} \, ds.\\
\end{aligned}
\end{equation*}
Hence (H1), (H5), \eqref{ADEF}, \eqref{DIFFAPPROX1}$_2$ and the
identity above imply
\begin{equation}\label{SESTTM2}
\begin{aligned}
\bigl|\Phi_{,i\alpha}^A(\bar{F}) \BBR{G_{,A}(\xi)\!-\!G_{,A}(\Xi)}\bigr|
&\!\leqslant C\bigl(|\nabla H(f)\!-\!\nabla H(F)| \!+\!|\nabla R(\xi)\!-\!\nabla R(\Xi)| \bigr)\\
& \!\leqslant  C\Bigl(|f \!-\! F|\!\! \int_{0}^1 \!\!\!|sf \!+\!(1\!-\!s)F|^{p-2}ds
\!+\! |\xi\!-\!\Xi|\Bigr)\\
&\!\leqslant C\bigl((|F^{j-1}|^{p-2}\!+\!|F^j|^{p-2})|\delta F^j| \!+ \!|\delta\Xi^j| \bigr).
\end{aligned}
\end{equation}


Next,  by (H1),  \eqref{PHIDIFFPROP2}, \eqref{GPHIDIFF},
\eqref{ADEF}, \eqref{DIFFAPPROX1}$_2$ and \eqref{PHIDIFFPROP3} we
obtain
\begin{equation}\label{SESTTM3}
\begin{aligned}
&\bigl|  (G_{,A}(\xi)\!-\!G_{,A}(\Xi))      (\Phi_{,i\alpha}^A(\tilde{f})\!-\!\Phi_{,i\alpha}^A(F)) \bigr| \\
&+\bigl| (G_{,A}(\xi)\!-\!G_{,A}(\Xi))      (\Phi_{,i\alpha}^A(F)\!-\!\Phi_{,i\alpha}^A(\bar{F}))   \bigr|\\
&+\bigl| (G_{,A}(\Xi)\!-\!G_{,A}(\bar{\Xi}))(\Phi_{,i\alpha}^A(\tilde{f})\!-\!\Phi_{,i\alpha}^A(F)) \bigr|\\
&\leqslant C \Bigl( |\delta \Xi^j|^2 \!+\! (1\!+\!|F^{j-1}|^2\!+\!|F^{j}|^2)|\delta F^j|^2
\!+\! (1\!+\!|F|^2)|F\! -\!\bar{F}|^2 \!+\! |\Xi \!-\! \bar{\Xi}|^2 \Bigr).
\end{aligned}
\end{equation}


Finally, \eqref{STERM}, \eqref{MBOUND}, and the estimates \eqref{SESTTM1}-\eqref{SESTTM3}
imply for $p\in{[6,\infty)}$
\begin{equation}\label{SEST1}
\begin{aligned}
|S(\cdot,t)| \leqslant & C_S \biggl[  (|F^{j-1}|^{p-2}+|F^{j}|^{p-2})|\delta F^j|^2 + |{\delta\Theta}^j|^2 \\
 &+(|F^{j-1}|^{p-2}+|F^{j}|^{p-2})|\delta F^j| + |\delta \Theta^j| + d(\Theta,\bar{\Theta}) \biggr]
\end{aligned}
\end{equation}
for some $C_S>0$ independent of $h$, $j$ and $t$. Then, by
\eqref{ITERBOUND} and \eqref{DISTEST1} we conclude that the right
hand side of \eqref{SEST1} is in $L^{\infty}\BBR{I_j'\,;
L^1(\mathbb{T}^3)}$ which proves \eqref{SINTEGRB}.


We now pick any $\eps>0$. Then, employing Young's inequality,
we obtain
$$
(|F^{j-1}|^{p-2}\!+ |F^j|^{p-2})|\delta F^j|
 \!\leqslant \!\frac{h}{\varepsilon}\! \BBR{|F^{j-1}|^{p-2}\!+\!|F^j|^{p-2}}
\!+\frac{\varepsilon}{h}\!\BBR{|F^{j-1}|^{p-2}\!+\!|F^j|^{p-2}}|\delta F^j|^2
$$
and, similarly, $|\delta \Theta^j| \leqslant
\frac{h}{\varepsilon}+\frac{\varepsilon}{h} |\delta\Theta^j|^2$.
Thus \eqref{SEST1} and the last two estimates imply
\begin{equation}\label{SEST2}
\begin{aligned}
\BBA{S(\cdot,t)} \, \leqslant &\,  C_S
\Bigl[\bigl(1+\frac{\varepsilon}{h}\bigr)\bigl( \,
|{\delta\Theta}^j|^2 +(|F^{j-1}|^{p-2}+|F^{j}|^{p-2})|\delta
F^j|^2 \,\bigr)\\
&+\frac{h}{\varepsilon}\bigl(1+|F^{j-1}|^{p-2}+|F^j|^{p-2}\bigr) + d(\Theta,\bar{\Theta})\Bigr].
\end{aligned}
\end{equation}
To this end, we integrate 
\eqref{SEST2} and use (H2) along with
\eqref{ITERBOUND} to get 
\eqref{SINTEST}.
\qed\end{proof}


\subsection{Conclusion of the proof via Gronwall's inequality}

We now estimate the left hand side of the relative entropy
identity \eqref{RENTID}:

\begin{lemma}[LHS estimate]\label{LHSESTLMM} Let $\eta^r$, $q^r$ be the
relative entropy and relative entropy flux, respectively, defined
by \eqref{RENTDEF} and \eqref{RENTFLUX}. Then
\begin{equation}\label{RENTIDLHSINTEGRB}
\Bigl(\partial_t \eta^r-\mathrm{div}\,q^r \Bigr)  \in
L^{\infty}\BBR{[0,T], L^1(\mathbb{T}^3) }
\end{equation}
and 
there exists $\bar{\eps}>0$ such that for all $h \in (0,\bar{\eps})$
and $\tau \in [0,T]$
\begin{equation}\label{RENTIDLHSEST}
\begin{aligned}
\int_{0}^{\tau}\int_{\mathbb{T}^3}
\,\bigl(\partial_t\,\eta^r-\mathrm{div}\,q^r \bigr)\,dx \,dt
\leqslant C_I\Bigl(\tau h +\int_{0}^{\tau}\int_{\mathbb{T}^3}
d(\Theta,\bar{\Theta}) \,dx\,dt\Bigr).
\end{aligned}
\end{equation}
for some constant $C_I=C_I(M,E_0,\bar{\eps})>0$.
\end{lemma}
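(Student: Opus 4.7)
The plan is to integrate the relative entropy identity \eqref{RENTID} over $\mathbb{T}^3\times[0,\tau]$, use the periodic boundary to discard the divergence term, and then combine the three preceding lemmas with an absorption argument that kills the quadratic piece of $S$ against the dissipation from $D^j$. Setting $I_j':=[(j-1)h,jh)\cap[0,\tau]$, with $N\leqslant\tau/h+1$ the number of intervals intersecting $[0,\tau]$, the identity gives
\begin{equation*}
\int_0^\tau\!\!\int_{\mathbb{T}^3}(\partial_t\eta^r-\mathrm{div}\,q^r)\,dx\,dt = \int_0^\tau\!\!\int_{\mathbb{T}^3} Q\,dx\,dt-\sum_{j=1}^{N}\int_{I_j'}\!\int_{\mathbb{T}^3}\frac{D^j}{h}\,dx\,dt+\int_0^\tau\!\!\int_{\mathbb{T}^3} S\,dx\,dt.
\end{equation*}

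The integrability claim \eqref{RENTIDLHSINTEGRB} would then follow term by term from \eqref{RENTID}: the bound $|Q|\leqslant\lambda\, d(\Theta,\bar\Theta)$ in Lemma~\ref{QTERMESTLMM}, combined with Lemma~\ref{RENTINTEQUIVLMM}, yields $Q\in L^\infty([0,T];L^1)$; the term $\tfrac{1}{h}\Chi^j D^j$ lies in $L^\infty(I_j';L^1)$ by \eqref{DINTEGRB}; and $S\in L^\infty(I_j';L^1)$ by \eqref{SINTEGRB}. For the quantitative bound \eqref{RENTIDLHSEST}, Lemma~\ref{QTERMESTLMM} gives $\int_0^\tau\!\int Q\leqslant\lambda\int_0^\tau\!\int d\,dx\,dt$; the dissipation contributes non-positively via the lower bound \eqref{DINTXTjEST}; and Lemma~\ref{STERMESTLMM} summed over $j$ produces a quadratic piece with coefficient $C_S(h+\varepsilon)$, an $O(h^2/\varepsilon)$ piece, and $C_S\int_0^\tau\!\int d\,dx\,dt$.

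The decisive step is to choose $\varepsilon:=C_D/(2C_S)$ and set $\bar\varepsilon:=C_D/(2C_S)$ so that $C_S(h+\varepsilon)\leqslant C_D$ whenever $h<\bar\varepsilon$; the quadratic piece from $S$ is then absorbed by the non-positive dissipation from the $D^j$-terms, yielding a non-positive net contribution. The residual quantities are (i) the accumulation $\sum_{j=1}^{N} a(\tau_j)\,h^2/\varepsilon\cdot(3+2E_0)$, which using $N\leqslant\tau/h+1$ totals $O(\tau h)$ with constant depending on $\bar\varepsilon$ and $E_0$; and (ii) the combined $d$-integrals from $Q$ and $S$, which produce the $\int_0^\tau\!\int d\,dx\,dt$-term with constant $\lambda+C_S$. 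This delivers \eqref{RENTIDLHSEST} with $C_I=C_I(M,E_0,\bar\varepsilon)$.

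The main obstacle is arranging the absorption cleanly, which requires $C_D$ of Lemma~\ref{DTERMESTLMM} and $C_S$ of Lemma~\ref{STERMESTLMM} to be uniform in $h$ and $j$; this uniformity rests on the iterate bound \eqref{ITERBOUND}. The remaining work is bookkeeping: summing across the $N$ subintervals so that the accumulated $h^2/\varepsilon$-terms produce exactly an $O(\tau h)$ contribution, and handling the possibly-truncated terminal interval $I_N'$ where $a(\tau)\in[0,1]$.
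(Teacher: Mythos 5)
Your proposal is correct and follows essentially the same route as the paper: integrate \eqref{RENTID}, get \eqref{RENTIDLHSINTEGRB} from Lemmas~\ref{RENTINTEQUIVLMM}, \ref{QTERMESTLMM} and the bounds \eqref{DINTEGRB}, \eqref{SINTEGRB}, then choose $\bar\eps=C_D/(2C_S)$ so that the quadratic part of the $S$-estimate is absorbed by the dissipation $D^j$, leaving the $O(\tau h)$ accumulation and the $d(\Theta,\bar\Theta)$-integral exactly as in the paper's constant $C_I$. The only cosmetic remark is that discarding $\mathrm{div}\,q^r$ by periodicity is not needed for this lemma (the divergence is kept on the left-hand side here and is only annihilated later, in the Gronwall step), but this does not affect your argument.
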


\begin{proof} Lemma \ref{RENTEQUIVLMM}, \eqref{QTERMEST}, \eqref{DINTEGRB},
and \eqref{SINTEGRB} imply that the right hand side of the
relative entropy identity \eqref{RENTID} is in
$L^{\infty}\BBR{[0,T]; L^1(\mathbb{T}^3)}$. This proves
\eqref{RENTIDLHSINTEGRB}.


Notice that the constants $C_D$ and $C_S$ (that appear in Lemmas
\ref{DTERMESTLMM} and \ref{STERMESTLMM}, respectively) are
independent of $h$, $j$. Then set $\bar{\eps}:= \frac{C_D}{2C_S}$. Take
now $h\in(0,\bar{\eps})$ and $\tau\in[0,T]$. Using Lemmas
\ref{QTERMESTLMM}, \ref{DTERMESTLMM} and \ref{STERMESTLMM} (with
$\eps=\bar{\eps}$) along with the fact that
$-C_D+C_S(h+\bar{\eps}) \leqslant 0 \,$ we get
\begin{equation*}
\begin{aligned}
\int_{0}^{\tau}\hspace{-2pt}\int_{\mathbb{T}^3}
\Bigl(-\frac{1}{h}\sum_{j=1}^{\infty} \Chi^j(t) \hspace{1pt}
D^j+|S|+|Q|\,\Bigr)dx dt \, \leqslant\, C_I\Bigl(\tau h
+\int_{0}^{\tau} \hspace{-2pt}\int_{\mathbb{T}^3}
d(\Theta,\bar{\Theta}) dx dt\Bigr)
\end{aligned}
\end{equation*}
with $C_I:= 3\max \big(C_S \frac{1+E_0}{\bar{\eps}},C_S+\lambda \big)>0$. Hence by
\eqref{RENTID} and the estimate above we obtain
\eqref{RENTIDLHSEST}.
\qed\end{proof}


Observe that (P4), (P5), \eqref{RENTFLUX}, \eqref{TIMEDVVXI},
\eqref{gEST}, \eqref{DIVPRODUCTS}, and \eqref{RENTFULX1} imply
\begin{equation*}
\mathrm{div}\,q^r\in L^{\infty}\BBR{[0,T];L^1(\mathbb{T}^3)}
\end{equation*}
and hence by \eqref{RENTIDLHSINTEGRB}
\begin{equation}\label{RENTINTEGRB}
\partial_t \eta^r \in L^{\infty}\BBR{[0,T];L^1(\mathbb{T}^3)}.
\end{equation}


Take now arbitrary $h\in(0,\bar{\eps})$ and $\tau \in [0,T]$. Due
to periodic boundary conditions (by the density argument) we have
$\int_{\mathbb{T}^3} \bigl(\mathrm{div}\,q^r(x,s)\bigr) \,dx=0$
for a.e. $s\in[0,T]$ and hence
\begin{equation*}
\int_{0}^{\tau}\int_{\mathbb{T}^3} \mathrm{div}\,q^r \,dx\,dt=0.
\end{equation*}
Finally, by construction for each fixed $\bar{x}\in \mathbb{T}^3$
the function $\eta^r(\bar{x},t):[0,T] \to \RR$ is absolutely
continuous with the weak derivative $\partial_t\eta^r(\bar{x},t)$.
Then, by \eqref{RENTINTEGRB} and  Fubini's theorem we have
\begin{equation*}
\int_{0}^{\tau}\int_{\mathbb{T}^3}\partial_t \eta^r
dx\,dt=\int_{\mathbb{T}^3} \BBS{\int_{0}^{\tau} \partial_t
\eta^r(x,t) \,d\tau} dx =\int_{\mathbb{T}^3} \Bigl(\eta^r(x,\tau)
- \eta^r(x,0)\Bigr) dx.
\end{equation*}
Thus by Lemma \ref{RENTINTEQUIVLMM},
\eqref{RENTIDLHSINTEGRB}-\eqref{RENTINTEGRB} and the two
identities above we obtain
\begin{equation}\label{PSIGROWTH}
\mathcal{E}(\tau)\,\leqslant\,\bar{C}\Bigl(\mathcal{E}(0)+\int_{0}^{\tau}\mathcal{E}(t)\,dt+h\Bigr)
\end{equation}
with $\bar{C} \!:=\! \tfrac{T}{\mu}\max(C_I,\mu')$ independent of
$\tau, h$. Since $\tau\!\in\![0,T]$ is arbitrary, by~\eqref{PSIGROWTH} and  Gronwall's inequality we conclude
\begin{equation*}
\mathcal{E}(\tau)\,\leqslant\,\bar{C} \bigl(\mathcal{E}(0)+h\bigr)
 e^{\bar{C}T}, \quad \forall\tau\in [0,T].
\end{equation*}
In this case, if $\mathcal{E}^{(h)}(0)\to 0$ as $h\downarrow 0$,
then $\sup_{\tau\in[0,T]} \bigl(\mathcal{E}^{(h)}(\tau)\bigr) \to
0$, as $h\downarrow 0$. \par\bigskip

\subsection*{Acknowledgement}

Research partially supported by the EU FP7-REGPOT project ``Archimedes Center for Modeling, Analysis and Computation'' and the EU EST-project ``Differential Equations and Applications in Science and Engineering''.

\end{document}